\theoremstyle{plain}
\newtheorem{thm}{Theorem}[section]
\newtheorem{cor}[thm]{Corollary}
\newtheorem{lem}[thm]{Lemma}
\newtheorem{alg}[thm]{Algorithm}
\newtheorem{prop}[thm]{Proposition}
\numberwithin{equation}{section} \theoremstyle{definition}
 \newtheorem{dfn}[thm]{Definition}
 \newtheorem{rem}[thm]{Remark}
\numberwithin{equation}{section}
\renewcommand{\to}{\longrightarrow}
\begin{document}
\title[Splitting algorithms of common solutions to equilibrium and inclusion]
{Splitting Algorithms of Common Solutions Between Equilibrium and Inclusion Problems  on Hadamard Manifolds}
\author[K. Khammahawong, P. Kumam and P. Chaipunya]
{Konrawut Khammahawong$^{1,2}$, Poom Kumam$^{1, 2,3, \S}$ and Parin Chaipunya$^{1,2,3}$
}
\date{}
\thanks{\it $^\S$ Corresponding author: poom.kumam@mail.kmutt.ac.th (P. Kumam)}
\maketitle

\begin{center}
{\footnotesize $^{1}$ Department of Mathematics, Faculty of Science, King Mongkut's University of Technology Thonburi (KMUTT), 126 Pracha Uthit Rd., Bang Mod,Thung Khru, Bangkok 10140, Thailand
	 \vskip 2mm
	 $^{2}$KMUTTFixed Point Research Laboratory,  KMUTT-Fixed Point Theory and Applications Research Group, Department of Mathematics, Faculty of Science, King Mongkut’s University of Technology Thonburi (KMUTT), 126 Pracha-Uthit Road, Bang Mod, Thrung Khru, Bangkok 10140, Thailand
 \vskip 2mm
$^{3}$Center of Excellence in Theoretical and Computational Science (TaCS-CoE), SCL 802 Fixed Point Laboratory, Science Laboratory Building, King Mongkut’s University of Technology Thonburi (KMUTT), 126 Pracha-Uthit Road, Bang Mod, Thrung Khru, Bangkok 10140, Thailand   \vskip 2mm
Email addresses: {k.konrawut@gmail.com  (K. Khammahawong),  \\
	poom.kumam@mail.kmutt.ac.th (P. Kumam)} and parin.cha@mail.kmutt.ac.th (P. Chaipunya) }

\end{center}

\maketitle

\hrulefill

\begin{abstract}
The aim of this article is to introduce an iterative algorithm for finding a common solution from the set of an equilibrium point for a bifunction and  the set of a singularity  of an inclusion problem on an Hadamard manifold. We also discuss some particular cases of the problem by the proposed algorithm. The convergence  of a sequence generated by the proposed algorithm is proved under mild assumptions. Moreover, we apply our results to solving minimization problems and minimax problems.
\end{abstract}

\begin{footnotesize}
\noindent {\bf Keywords :}  Equilibrium problems $\cdot$ Fixed points $\cdot$ Firmly nonexpansive mappings $\cdot$ Hadamard manifolds $\cdot$ Inclusion problems $\cdot$  Monotone vector fields $\cdot$  proximal method \\
{\bf Mathematics Subject Classification:}  		26B25 $\cdot$  47H05 $\cdot$  47J25  $\cdot$ 58A05 $\cdot$ 58C30 $\cdot$ 90C33    \\
\end{footnotesize}
\maketitle

\hrulefill

\section{Introduction}
Equilibrium problem (EP) was firstly introduced by Fan \cite{MR0341029} and extensively developed later  by Blum and Oettli\cite{MR1292380}.  Let $H$ be a real Hilbert space, $K$ a nonempty closed convex subset of $H$ and $F : K \times K \to \mathbb{R}$ a bifunction satisfying $F(x,x) =0$, for all $x \in K$. The equilibrium problem for a bifunction $F$ is to find $x^* \in K$ such that
\begin{equation}\label{eq EP1}
F(x^*,y) \geq 0, \quad \forall y \in K. 
\end{equation}
Herein,  $F$ is said to be the {\it equilibrium bifunction}. The theory of equilibrium problem plays a vital role in nonlinear problems, e.g., variational inequalities, optimization problems, Nash equilibrium problems,  complementarity problems  and so on, (see, for example \cite{MR2503647,MR1171603,MR3702600,MR3433705} and the references therein). 

In 1976, Rockafellar \cite{MR0410483} considered the following inclusion problem:
\begin{equation}\label{eq inclusion hilbert}
{\rm find} \ x \in K \ {\rm such \ that} \ 0 \in A(x),
\end{equation}
where $A: K \to 2^H$ is a given maximal monotone operator. The classical method for solving inclusion problem \eqref{eq inclusion hilbert} is the proximal point method. The method was firstly introduced by Martinet \cite{MR0298899} for convex minimization and further generalized by Rockafellar \cite{MR0410483}. Many problems in nonlinear analysis, optimization problem, convex programming problem, variational inequality problem, PDEs, economics are reduced to finding a singularity of the problem \eqref{eq inclusion hilbert}, see for example \cite{MR2815119,MR1776022,MR1970677,MR1761939,MR1788273} and the references therein.

 During the last decade, many issues in nonlinear analysis such as fixed point theory, convex analysis, variational inequality, equilibrium theory, and optimization theory have,	 been magnified  from linear setting, namely, Banach spaces or Hilbert spaces, etc., to nonlinear system because the problems cannot be posted in the linear space and require a  manifold structure (not necessary with linear structure).  The main advantages of these extensions are that non-convex problems in the general sense  are  transformed into convex problems, and constrained problems also transform into unconstrained  problems. Eigenvalue optimization problems \cite{MR1297990} and geometric models for the human spine \cite{MR1918655} are typical examples of the situation. Therefore, many authors have focused on extension and development of nonlinear problems techniques on the Riemannian manifold, see for examples  \cite{MR2824431, MR1928039, MR2869729,MR3131827} and the reference therein. 
 
  In 2012, Calao et al. \cite{MR2869729} studied the equilibrium problems on a Hadamard manifold.  Let $M$ be an Hadamard manifold, $TM$ is the tangent bundle of $M$, $K$ a nonempty closed geodesic convex subset of $M$, and $F : K \times K \to \mathbb{R}$ a bifunction satisfies $F(x,x) = 0$, for all $x \in K$. Then the equilibrium problem on the Hadamard manifold is to find $x^* \in K$ such that 
 \begin{equation}\label{eq EP}
 F(x^*,y) \geq 0, \quad \forall y \in K.
 \end{equation}
 We denote $EP(F)$ by the set of a equilibrium point of the equilibrium problem \eqref{eq EP}. They studied the existence of an equilibrium point for a bifunction under suitable conditions and applied their results to solving mixed variational inequality problems, fixed point problems and Nash equilibrium problems in Hadamard manifolds. The authors also introduced Picard iterative method to approximate a solution of the problem \eqref{eq EP}. However, Wang et al. \cite{MR3912856} found some gaps in  the existence proof of the mixed variational inequalities and the domain of the resolvent for the equilibrium problems in \cite{MR2869729}. 
 
 The inclusion problem \eqref{eq inclusion hilbert} is considered by Li et al. \cite{MR2506692}  in Hadamard manifolds, and it reads as follows:
 \begin{equation}\label{eq inclusion 2}
 {\rm  find } \ x \in K \ {\rm such \ that} \  {\bf 0} \in A(x),
 \end{equation}
 where $A: K \to 2^{TM}$ be a multivalued vector field on a Hadamard manifold and ${\bf 0}$ denotes the zero section of $TM$. We denote $A^{-1}(\textbf{0})$ by the set of a singularity of the inclusion problem \eqref{eq inclusion 2}. The authors also extended the general proximal point method from Euclidean spaces to Hadamard manifolds for solving the inclusion problem \eqref{eq inclusion 2}. 
 
 Motivated by above results, we introduce iterative algorithm for finding a common solution of the equilibrium problem \eqref{eq EP} and the inclusion problem \eqref{eq inclusion 2} on Hadamard manifolds. Our proposed algorithm can be regraded as the double-backward method for the two underlying problems.

 The rest of this paper is organized in the following: In Section \ref{sec2}, we give some basic concepts and fundamental results of Riemannian manifolds as well as some useful results. In Section \ref{sec3}, we introduce the problem of finding $x \in EP(F) \cap A^{-1}({\bf 0})$, which is  a common solution of the set of an equilibrium point of a bifunction and the set of  a singularlity of the multivalued vector field.  We propose an iterative algorithm  and establish convergence results of a sequence generated by the proposed algorithm converges to a common solution of the proposed problem on Hadamard manifolds.   In the last section, we devote our results to minimization problems and minimax problems on Hadamard manifolds.

 \section{Preliminaries}\label{sec2}
In this section, we recall some fundamental definitions, properties, useful results, and notations of Riemannian geometry. Readers refer to some textbooks \cite{MR1390760, MR1138207, MR1326607} for more details. 

Let $M$ be a connected finite-dimensional manifold.  For $p \in M$, we denote $T_pM$  the {\it tangent space} of $M$ at $p$ which is a vector space of the same dimension as $M$, and by $TM = \bigcup_{p \in M}T_pM$ the  {\it tangent bundle} of $M$. We always suppose that $M$ can be endowed with a Riemannian metric $\langle \cdot , \cdot \rangle_p$, with corresponding norm denoted by $\|\cdot\|_p$, to become a {\it Riemannian manifold}. The angle $\angle_p(u,v)$ between $u,v \in T_pM \ (u,v \neq 0)$ is set by $\cos \angle_p(u,v) = \dfrac{\langle u , v \rangle_p}{\|u\|\|v\|}.$ If there is no confusion, we denote $\langle \cdot , \cdot \rangle := \langle \cdot , \cdot \rangle_p$, $\|\cdot\| := \|\cdot\|_p$ and $\angle(u,v) := \angle_p(u,v)$.  Let  $\gamma : [a,b] \to M$ be a piecewise smooth curve  joining $\gamma (a) = p$ to $\gamma (b) = q$, we define the length of the curve $\gamma$ by using the metric as 
$$L(\gamma) = \int_a^b \|\gamma^{'}(t)\|dt,$$
minimizing the length function over the set of all such curves, we obtain a Riemannian distance $d(p,q)$ which induces the original topology on $M$.

Let $\nabla$ be a Levi-Civita connection associated to $(M, \langle \cdot , \cdot \rangle)$. Given $\gamma$ a smooth curve, a smooth vector field $X$ along $\gamma$ is said to be {\it parallel} if  $\nabla_{\gamma^{'}}X ={\bf 0}$. If $\gamma^{'}$ itself is parallel, we say that $\gamma$ is a {\it geodesic}, and in this case $\|\gamma^{'}\|$ is a constant. When $\|\gamma^{'}\| = 1$, then $\gamma$ is said to be {\it normalized}. A geodesic joining $p$ to $q$ in $M$ is said to be a {\it minimal geodesic} if its length equals to $d(p,q)$.

A Riemannian manifold is complete if for any $p \in M$ all geodesic emanating from $p$ are defined for all $t \in \mathbb{R}$. From the Hopf-Rinow theorem we know that if $M$ is complete then any pair of points in $M$  can be joined by a minimal geodesic. Moreover, $(M,d)$ is a complete metric space and every bounded closed subsets are compact.

Let $M$ be a complete Riemannian manifold and $ p \in M$. The exponential map $\exp_p : T_pM \to M$ is defined as $\exp_pv = \gamma_v(1,x)$, where $\gamma(\cdot) = \gamma_v(\cdot,x)$ is the geodesic starting at $p$ with velocity $v$ (i.e., $\gamma_v(0,p) = p$ and $\gamma^{'}_v(0,p) = v$). Then, for any value of $t$, we have $\exp_ptv = \gamma_v(t,p)$ and $\exp_p{\bf 0} = \gamma_v(0,p) =p$. Note that the exponential $\exp_p$ is differentiable on $T_pM$ for all $p \in M$. It well known that the derivative $D \exp_p({\bf 0})$ of $\exp_p({\bf 0})$ is equal to the identity vector of $T_pM.$ Therefore, by the inverse mapping theorem, there exists an inverse exponential map $\exp^{-1} : M \to T_pM$. Moreover, for any $p,q \in M$, we have $d(p,q) = \|\exp_p^{-1}q\|$.

A complete simply connected Riemannian manifold of non-positive sectional  curvature is said to be an {\it Hadamard manifold.} Throughout the remainder of the paper, we always assume that $M$ is a finite-dimensional Hadamard manifold.  The following proposition is well-known and will be useful.

\begin{prop}{\rm \cite{MR1390760}} \label{prop1}
	Let $p \in M$. The $\exp_p : T_pM \to M$ is a diffeomorphism, and for any two points $p,q \in M$ there exists a unique normalized geodesic joining $p$ to $q$, which is can be expressed by the formula
	$$\gamma (t) = \exp_p t \exp_p^{-1}q, \quad \forall t \in [0,1].$$
\end{prop}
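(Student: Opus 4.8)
The statement is the classical Cartan--Hadamard theorem, so my plan is to reconstruct its proof in three stages: first that $\exp_p$ is globally defined and everywhere a local diffeomorphism, then that it is a covering map, and finally that simple connectedness upgrades this covering to a global diffeomorphism; the uniqueness of the joining geodesic then drops out immediately. To begin, I would record that by completeness of $M$ and the Hopf--Rinow theorem already quoted above, every geodesic issuing from $p$ is defined on all of $\mathbb{R}$, so $\exp_p$ is defined on the whole tangent space $T_pM$.

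The geometric heart is to show that $p$ has no conjugate points, equivalently that $d(\exp_p)_v$ is nonsingular for every $v\in T_pM$. Recall that $d(\exp_p)_v$ is singular exactly when there is a nontrivial Jacobi field $J$ along $\gamma(t)=\exp_p(tv)$ with $J(0)=\mathbf 0=J(1)$, where $J$ solves the Jacobi equation $J''+R(J,\gamma')\gamma'=\mathbf 0$. Using that $M$ has non-positive sectional curvature, so that $\langle R(J,\gamma')\gamma',J\rangle\le 0$, I would compute $\frac{d^2}{dt^2}\|J\|^2 = 2\|J'\|^2-2\langle R(J,\gamma')\gamma',J\rangle\ge 0$. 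Hence $\|J\|^2$ is convex; combined with $J(0)=\mathbf 0$ and $J\not\equiv\mathbf 0$ this forces $\|J(t)\|^2>0$ for all $t>0$, so no interior zero can occur and there are no conjugate points. Thus $\exp_p$ is everywhere a local diffeomorphism. This curvature estimate is the only genuinely geometric input, and it is precisely where the Hadamard hypothesis is used.

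For the global step I would endow $T_pM$ with the pullback metric $\exp_p^*\langle\cdot,\cdot\rangle$, which is a genuine Riemannian metric exactly because $\exp_p$ is a local diffeomorphism, and with respect to which $\exp_p$ is a local isometry. The radial lines $t\mapsto tv$ are then geodesics of this metric defined for all $t$, so $(T_pM,\exp_p^*\langle\cdot,\cdot\rangle)$ is geodesically complete; a standard result asserts that a local isometry from a complete Riemannian manifold onto a connected manifold is a covering map, whence $\exp_p$ is a covering. Since $M$ is simply connected and the domain $T_pM$ is connected, the covering has a single sheet, i.e.\ $\exp_p$ is a bijection; being also a local diffeomorphism, it is a global diffeomorphism. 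I expect the passage from \emph{local diffeomorphism} to \emph{covering map} to be the main obstacle, since this is the point at which completeness must be converted into a global path-lifting property rather than exploited only locally.

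Finally, the second assertion follows as a corollary. Given $q\in M$, bijectivity provides the unique $v:=\exp_p^{-1}q\in T_pM$ with $\exp_p v=q$, and $\gamma(t)=\exp_p(tv)=\exp_p\big(t\exp_p^{-1}q\big)$ is a geodesic with $\gamma(0)=p$ and $\gamma(1)=q$; any competing geodesic from $p$ to $q$ corresponds to another preimage of $q$ under $\exp_p$ and must therefore coincide with $\gamma$, giving uniqueness. The displayed representative has constant speed $\|\exp_p^{-1}q\|=d(p,q)$, so reparametrizing by arc length yields the unit-speed (normalized) minimal geodesic while leaving uniqueness untouched, consistent with the identity $d(p,q)=\|\exp_p^{-1}q\|$ recorded above.
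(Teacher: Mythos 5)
Your argument is correct: it is the standard proof of the Cartan--Hadamard theorem (non-positive curvature kills conjugate points via the convexity of $\|J\|^2$ for Jacobi fields, so $\exp_p$ is a local diffeomorphism; completeness makes it a covering; simple connectedness makes it a diffeomorphism; uniqueness of the joining geodesic is then immediate). The paper itself offers no proof --- Proposition \ref{prop1} is quoted from Sakai's textbook --- so there is nothing to compare against; I will only note that your reconstruction is careful where it needs to be, e.g.\ in observing that the completeness of the pulled-back metric on $T_pM$ comes from the radial geodesics through the origin (via the pointwise form of Hopf--Rinow), and in reconciling the constant-speed formula $\gamma(t)=\exp_p t\exp_p^{-1}q$ with the word ``normalized'' in the statement.
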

\noindent This proposition  yields  that $M $ is diffeomorphic to the Euclidean space $\mathbb{R}^n$. Then, $M$ has same topology and differential structure as $\mathbb{R}^n$. Moreover, Hadamard manifolds and Euclidean spaces have some similar geometrical properties. One of the most important proprieties is  illustrated in the following propositions.

A geodesic triangle $\bigtriangleup(p_1,p_2,p_3)$ of a Riemannian manifold $M$ is a set consisting of three points $p_1$, $p_2$ and $p_3$, and three minimal geodesic $\gamma_i$ joining $p_{i}$ to $p_{i+1}$ where $i =1,2,3 \ ({\rm mod} 3)$.

\begin{prop}{\rm \cite{MR1390760}} \label{prop2}
	Let $\bigtriangleup(p_1,p_2,p_3)$ be a geodesic triangle in $M$. For each $i = 1, 2, 3  \ ({\rm mod} 3)$, given $\gamma_{i} : [0, l_{i}] \to M$ the geodesic joining $p_{i}$ to $p_{i+1}$ and set $l_{i} := L(\gamma_{i})$, $\alpha_{i} : \angle(\gamma_{i}^{'}(0), -\gamma_{i-1}^{'}(l_{i-1}))$. Then
	\begin{equation}\label{eq tri1}
	\alpha_1 + \alpha_2 + \alpha_3 \leq \pi;
	\end{equation}
	\begin{equation}\label{eq tri2}
	l_{i}^2 + l_{i+1}^2 - 2l_{i}l_{i+1}\cos \alpha_{i+1} \leq l_{i-1}^2.
	\end{equation}
	In the terms of the distance and the exponential map, the inequality \eqref{eq tri2} can be rewritten as
	\begin{equation}\label{eq tri3}
	d^2(p_{i},p_{i+1}) + d^2(p_{i+1},p_{i+2}) - 2\langle \exp^{-1}_{p_{i+1}}p_{i} , \exp^{-1}_{p_{i+1}}p_{i+2} \rangle \leq d^2(p_{i-1},p_{i}),
	\end{equation}
	where $\langle \exp^{-1}_{p_{i+1}}p_{i} , \exp^{-1}_{p_{i+1}}p_{i+2} \rangle = d(p_{i},p_{i+1})d(p_{i+1},p_{i+2})\cos \alpha_{i +1}.$
\end{prop}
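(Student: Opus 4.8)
The plan is to isolate one genuinely geometric inequality --- the comparison form of the law of cosines --- and to obtain the rest by Euclidean trigonometry and the exponential-map dictionary recorded in the preliminaries. Observe first that \eqref{eq tri2} is itself exactly a \emph{hinge comparison}: at the vertex $p_{i+1}$ the two adjacent sides have lengths $l_i$ and $l_{i+1}$ and meet at the angle $\alpha_{i+1}$, while the side opposite $p_{i+1}$ has length $l_{i-1}$ (indices $\mathrm{mod}\ 3$), so \eqref{eq tri2} asserts that this opposite length is at least the length the third side would have in the Euclidean plane for the same two lengths and opening angle. I would therefore prove \eqref{eq tri2} first and read off \eqref{eq tri1} and \eqref{eq tri3} from it.

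The main obstacle is establishing the hinge comparison \eqref{eq tri2}, and for this I would use Jacobi fields together with the Rauch comparison theorem. Consider the geodesic variation $\Gamma(s,t) = \exp_{p_{i+1}}\!\big(t\,\xi(s)\big)$ whose $s$-curves sweep the two sides meeting at $p_{i+1}$ out to the opposite side $p_i p_{i+2}$; the variation field $J_s(t) = \partial_s \Gamma(s,t)$ is a Jacobi field along each radial geodesic with $J_s(0) = {\bf 0}$. The second variation formula expresses the evolution of $\|J_s\|$ through the sectional curvature along these geodesics, and the hypothesis that $M$ is Hadamard (sectional curvature $\le 0$) forces $\|J_s\|$ to be convex in the arc-length parameter, so the transverse spreading of the sides is at least as large as in the flat model $\mathbb{R}^n$. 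Integrating this infinitesimal comparison over the variation yields that the opposite distance $d(p_i,p_{i+2}) = l_{i-1}$ dominates its Euclidean counterpart, which is precisely \eqref{eq tri2}. All curvature information enters at this single step; I expect the careful handling of the second variation and the boundary term at $p_{i+1}$ (where $J_s$ vanishes) to be the delicate bookkeeping.

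It remains to deduce \eqref{eq tri1} and \eqref{eq tri3}. For \eqref{eq tri1}, introduce the Euclidean comparison triangle $\bar{\bigtriangleup}(\bar p_1,\bar p_2,\bar p_3) \subset \mathbb{R}^2$ with the same side lengths $l_1,l_2,l_3$ (it exists and is unique up to isometry because the $l_i$ satisfy the triangle inequality in the complete metric space $(M,d)$) and let $\bar\alpha_i$ denote its angles. The exact Euclidean law of cosines gives $l_{i-1}^2 = l_i^2 + l_{i+1}^2 - 2 l_i l_{i+1}\cos\bar\alpha_{i+1}$, and comparing with \eqref{eq tri2} yields $\cos\alpha_{i+1} \ge \cos\bar\alpha_{i+1}$, hence $\alpha_{i+1}\le\bar\alpha_{i+1}$; summing over $i$ and using $\bar\alpha_1+\bar\alpha_2+\bar\alpha_3 = \pi$ gives \eqref{eq tri1}. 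Finally, \eqref{eq tri3} is a direct rewriting of \eqref{eq tri2}: since $\exp^{-1}_{p_{i+1}}p_i$ and $\exp^{-1}_{p_{i+1}}p_{i+2}$ are the initial velocities at $p_{i+1}$ of the minimal geodesics to $p_i$ and $p_{i+2}$, their norms equal $l_i$ and $l_{i+1}$ and the angle between them is $\alpha_{i+1}$, so $\langle \exp^{-1}_{p_{i+1}}p_i,\exp^{-1}_{p_{i+1}}p_{i+2}\rangle = l_i l_{i+1}\cos\alpha_{i+1}$; substituting this together with $d(p_j,p_{j+1}) = l_j$ into \eqref{eq tri2} produces \eqref{eq tri3}.
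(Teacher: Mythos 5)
The paper offers no proof of this proposition: it is quoted directly from Sakai \cite{MR1390760}, and the surrounding text only records its equivalence with \eqref{eq angle} and the CAT($0$) inequality. So there is nothing in the paper to measure your argument against; what you have written is essentially the standard textbook derivation, and its logical skeleton is correct: prove the hinge comparison \eqref{eq tri2} from nonpositive curvature, deduce \eqref{eq tri1} by passing to the Euclidean comparison triangle (which exists because the $\gamma_i$ are \emph{minimal}, so the $l_i$ are distances and satisfy the triangle inequality) and using that $\cos$ is decreasing on $[0,\pi]$, and obtain \eqref{eq tri3} by substituting $\exp^{-1}_{p_{i+1}}p_i=-l_i\,\gamma_i'(l_i)$ and $\exp^{-1}_{p_{i+1}}p_{i+2}=l_{i+1}\,\gamma_{i+1}'(0)$ for normalized sides. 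The one step that needs to be pinned down is the integration in your curvature argument. The Jacobi-field estimate $\|J_s(1)\|\ge\|J_s'(0)\|=\|\xi'(s)\|$ bounds from below the length of the particular transverse curve $s\mapsto\Gamma(s,1)$, not the distance between its endpoints; if you were to take $\xi(s)$ to be the straight segment in $T_{p_{i+1}}M$ from $u=\exp^{-1}_{p_{i+1}}p_i$ to $v=\exp^{-1}_{p_{i+1}}p_{i+2}$, you would only learn that \emph{some} curve joining $p_i$ to $p_{i+2}$ has length at least $\|u-v\|$, which proves nothing about $l_{i-1}$. You must instead take $\xi(s)=\exp^{-1}_{p_{i+1}}(\sigma(s))$ with $\sigma$ the minimal geodesic from $p_i$ to $p_{i+2}$, so that $l_{i-1}=L(\sigma)=\int\|J_s(1)\|\,ds\ge\int\|\xi'(s)\|\,ds\ge\|\xi(1)-\xi(0)\|=\|u-v\|$; equivalently, the curvature input is that $\exp^{-1}_{p_{i+1}}$ is $1$-Lipschitz. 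Your phrase ``out to the opposite side'' suggests you intend exactly this, but it is the one place where the proof breaks under the other reading, so it should be made explicit (and the relevant tool there is the Jacobi equation and the convexity of $t\mapsto\|J_s(t)\|$, rather than the second variation formula per se).
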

The following  relation between geodesic triangles in Riemannian manifolds and triangles in $\mathbb{R}^2$ can be referred to \cite{MR1744486}.

\begin{lem}\label{lem tri}{\rm  \cite{MR1744486}}
	Let $\bigtriangleup(p_1,p_2,p_3)$ be a geodesic triangle in $M$. Then there exists a triangle $\bigtriangleup(\overline{p_1}, \overline{p_2}, \overline{p_3})$ for  $\bigtriangleup(p_1,p_2,p_3)$ such that $d(p_{i},p_{i+1}) = \|\overline{p_i} - \overline{p_{i+1}}\|$, indices taken modulo $3$; it is unique up to an isometry of $\mathbb{R}^2$.
\end{lem}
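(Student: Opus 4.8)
The plan is to reduce the statement to the classical side-side-side (SSS) construction and congruence of triangles in the Euclidean plane. First I would record the three side lengths $a := d(p_1,p_2)$, $b := d(p_2,p_3)$ and $c := d(p_3,p_1)$. Because $M$ is complete, Hopf-Rinow guarantees that $(M,d)$ is a genuine metric space, so these three numbers satisfy the triangle inequalities $a \leq b+c$, $b \leq c+a$ and $c \leq a+b$. These are precisely the conditions needed for three nonnegative reals to arise as the side lengths of a (possibly degenerate) triangle in $\mathbb{R}^2$.

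For existence I would construct the comparison triangle explicitly. Place $\overline{p_1} = (0,0)$ and $\overline{p_2} = (a,0)$, so that $\|\overline{p_1} - \overline{p_2}\| = a = d(p_1,p_2)$. I then look for $\overline{p_3} = (x,y)$ satisfying $x^2 + y^2 = c^2$ together with $(x-a)^2 + y^2 = b^2$. Subtracting these equations eliminates $y$ and gives $x = (a^2 + c^2 - b^2)/(2a)$ (the degenerate case $a = 0$, forcing $p_1 = p_2$, being handled directly); substituting back yields $y^2 = c^2 - x^2$, and the triangle inequalities ensure $y^2 \geq 0$, so a real solution with $y \geq 0$ exists. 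By construction $\|\overline{p_1} - \overline{p_3}\| = c$ and $\|\overline{p_2} - \overline{p_3}\| = b$, so all three side lengths match those of $\bigtriangleup(p_1,p_2,p_3)$.

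For uniqueness up to isometry I would invoke SSS congruence. Suppose $\bigtriangleup(\overline{q_1}, \overline{q_2}, \overline{q_3})$ is another triangle in $\mathbb{R}^2$ with the same side lengths. Composing a translation that sends $\overline{q_1}$ to the origin with a rotation that aligns $\overline{q_2}$ along the positive $x$-axis, I reduce to $\overline{q_1} = (0,0)$ and $\overline{q_2} = (a,0)$; then $\overline{q_3}$ must solve the same pair of circle equations, so it coincides either with $\overline{p_3}$ or with its mirror image across the $x$-axis, the latter being brought into coincidence by the reflection $(x,y) \mapsto (x,-y)$. Since translations, rotations and reflections are isometries of $\mathbb{R}^2$, the two triangles differ by an isometry, which is the claimed uniqueness.

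The algebra and the explicit placement are routine; the only point that demands care is the uniqueness argument, where one must verify that the isometry group of $\mathbb{R}^2$ acts transitively enough to normalize one chosen side and that the residual reflection ambiguity in the position of the third vertex is itself realized by an isometry. The degenerate collinear configuration, which occurs exactly when one of the triangle inequalities is an equality and for which the comparison ``triangle'' is a segment, should be noted but causes no real difficulty.
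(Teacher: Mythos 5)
Your proof is correct. The paper does not prove this lemma at all --- it is quoted from Bridson--Haefliger \cite{MR1744486} as a known fact --- and your argument is essentially the standard one given there: the triangle inequality for the Riemannian distance is exactly the condition needed for the two circle equations to have a real solution (one checks $|a^2+c^2-b^2|\leq 2ac$ is equivalent to $b\leq a+c$ and $|a-c|\leq b$), and SSS congruence gives uniqueness up to isometry. It is worth noting, as your proof makes visible, that this statement uses only that $d$ is a metric and nothing about nonpositive curvature; the curvature hypothesis enters only in the subsequent comparison inequalities \eqref{eq angle} and \eqref{eq less}, not in the existence of the comparison triangle itself. Your attribution of the triangle inequality to Hopf--Rinow is slightly off (it holds for the distance function of any connected Riemannian manifold; Hopf--Rinow is needed only to realize the sides as minimal geodesics), but this does not affect the argument.
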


The triangle $\bigtriangleup(\overline{p_1}, \overline{p_2}, \overline{p_3})$  in Lemma \ref{lem tri} is said to be a  {\it comparison triangle} for $\bigtriangleup(p_1,p_2,p_3)$. The  geodesic side from $x$ to $y$ will be denoted $[x,y]$. A point $\overline{x} \in [\overline{p_1}, \overline{p_2}]$ is said to be a {\it comparison point} for $x \in [p_1,p_2]$ if $\|\overline{x} - \overline{p_1}\| = d(x,p_1)$. The interior angle of  $\bigtriangleup(\overline{p_1}, \overline{p_2}, \overline{p_3})$ at $\overline{p_1}$ is said to be  the {\it comparison angle} between $\overline{p_2}$ and $\overline{p_3}$ at $\overline{p_1}$ and is denoted $\angle_{\overline{p_1}}(\overline{p_2},\overline{p_3})$. With all notation as in the statement of Proposition \ref{prop2}, according to the law of cosine, \eqref{eq tri2} is valid if and only if
\begin{equation}\label{eq angle}
\langle \overline{p_2} - \overline{p_1}, \overline{p_3} - \overline{p_1} \rangle_{\mathbb{R}^2} \leq \langle \exp^{-1}_{p_{1}}p_2, \exp^{-1}_{p_{1}}p_{3} \rangle
\end{equation}
or, 
$$\alpha_1 \leq \angle_{\overline{p_1}}(\overline{p_2},\overline{p_3})$$
or, equivalent, $\bigtriangleup(p_1,p_2,p_3)$ satisfies the CAT($0$) inequality and that is, given a comparison triangle $\overline{\bigtriangleup} \subset \mathbb{R}^2$ for $\bigtriangleup(p_1,p_2,p_3)$ for all $x,y \in \bigtriangleup$,
\begin{equation}\label{eq less}
d(x,y) \leq \|\overline{x} - \overline{y}\|,
\end{equation}
where $\overline{x}, \overline{y} \in \overline{\bigtriangleup}$ are the respective comparison points of $x,y$.

A subset $K$ is called {\it geodesic convex} if for every two points $p$ and $q$ in $K$, the geodesic joining $p$ to $q$ is contained in $K$, that is, if $\gamma : [a,b] \to M$ is a geodesic such that $p = \gamma(a)$ and $q = \gamma(b)$, then $\gamma((1-t)a + tb) \in K$ for all $t \in [0,1].$ 

A real function $f: M \to \mathbb{R}$ is called {\it geodesic convex} if for any geodesic $\gamma$ in $M$, the composition function $f \circ \gamma : [a,b] \to \mathbb{R}$ is convex, that is,
$$(f\circ\gamma)(ta + (1-t)b) \leq t(f \circ \gamma)(a) + (1-t)(f \circ \gamma)(b)$$
where $a,b \in \mathbb{R}$, and $ 0 \leq t \leq 1 $.  

\begin{prop}\label{prop dis geodesic} {\rm \cite{MR1390760}} 
	Let $d : M \times M \to \mathbb{R}$ be the distance function. Then $d(\cdot,\cdot)$ is a geodesic convex function with respect to the product Riemannian metric, that is, for any pair of geodesics $\gamma_1 : [0,1] \to M$ and $\gamma_2 : [0,1] \to M$ the following inequality holds for all $t \in [0,1]$
	$$d(\gamma_1(t), \gamma_2(t)) \leq (1-t)d(\gamma_1(0),\gamma_2(0)) + td(\gamma_1(1),\gamma_2(1)).$$
\end{prop}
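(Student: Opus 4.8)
The plan is to reduce the convexity of $d(\cdot,\cdot)$ along a pair of geodesics to two applications of the CAT($0$) inequality \eqref{eq less}, bridged by the ordinary triangle inequality. Write $p_0 = \gamma_1(0)$, $p_1 = \gamma_1(1)$, $q_0 = \gamma_2(0)$, $q_1 = \gamma_2(1)$, and set $x_t := \gamma_1(t)$, $y_t := \gamma_2(t)$; the target inequality then reads $d(x_t, y_t) \le (1-t)\,d(p_0,q_0) + t\,d(p_1,q_1)$. The idea is to interpolate between the two geodesics through an auxiliary ``diagonal'': by Proposition \ref{prop1} there is a unique normalized geodesic from $p_0$ to $q_1$, and I let $w_t$ denote the point on it at parameter $t$, that is, $w_t = \exp_{p_0}\!\big(t\exp_{p_0}^{-1}q_1\big)$.

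First I would estimate $d(x_t, w_t)$ using the geodesic triangle $\bigtriangleup(p_0, p_1, q_1)$. Here $x_t$ lies on the side $[p_0,p_1]$ and $w_t$ on the side $[p_0,q_1]$, both at parameter $t$ measured from the common vertex $p_0$. Passing to a comparison triangle $\overline{\bigtriangleup}(\overline{p_0},\overline{p_1},\overline{q_1}) \subset \mathbb{R}^2$ furnished by Lemma \ref{lem tri}, the corresponding comparison points are $\overline{x}_t = (1-t)\overline{p_0} + t\overline{p_1}$ and $\overline{w}_t = (1-t)\overline{p_0} + t\overline{q_1}$, so that $\overline{x}_t - \overline{w}_t = t(\overline{p_1} - \overline{q_1})$ and hence $\|\overline{x}_t - \overline{w}_t\| = t\,\|\overline{p_1}-\overline{q_1}\| = t\,d(p_1,q_1)$, the last equality holding because comparison triangles preserve side lengths. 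The CAT($0$) inequality \eqref{eq less} then yields $d(x_t, w_t) \le t\,d(p_1,q_1)$.

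Next I would estimate $d(w_t, y_t)$ in exactly the same manner, now using the triangle $\bigtriangleup(q_1, q_0, p_0)$. Both $y_t$ (on $[q_0,q_1]$) and $w_t$ (on $[p_0,q_1]$) sit at parameter $1-t$ as measured from the common vertex $q_1$, and the identical similar-triangles computation in the comparison triangle gives $\|\overline{w}_t - \overline{y}_t\| = (1-t)\,d(p_0,q_0)$, whence $d(w_t, y_t) \le (1-t)\,d(p_0,q_0)$ by \eqref{eq less}. Combining the two bounds through the triangle inequality $d(x_t,y_t) \le d(x_t,w_t) + d(w_t,y_t)$ produces exactly the claimed estimate.

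The only point requiring care is the bookkeeping in the two comparison triangles: one must check that $x_t, w_t$ (respectively $w_t, y_t$) are genuinely the comparison points at the \emph{same} parameter measured from a shared vertex, so that their Euclidean distance collapses to $t$ (respectively $1-t$) times a single side length by the intercept theorem. Once the parametrizations are aligned correctly each step is a one-line consequence of \eqref{eq less}, and the degenerate case $p_0 = q_1$ (where the diagonal collapses) follows immediately from the triangle inequality and needs no separate treatment.
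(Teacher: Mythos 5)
The paper does not actually prove this proposition: it is quoted from \cite{MR1390760} as a known fact, so there is no in-text argument to compare yours against. Judged on its own, your proof is correct and is the standard comparison-triangle argument for convexity of the metric in a CAT($0$) space: the diagonal $w_t$ from $p_0$ to $q_1$, the two applications of \eqref{eq less} in the triangles $\bigtriangleup(p_0,p_1,q_1)$ and $\bigtriangleup(p_0,q_0,q_1)$, and the final triangle inequality all go through exactly as you describe. The one implicit step worth making explicit is that $x_t$ and $w_t$ really are the comparison points at parameter $t$ from the shared vertex, i.e.\ that $d(p_0,\gamma_1(t)) = t\,d(p_0,p_1)$; this holds because on a Hadamard manifold every geodesic is minimal (Proposition \ref{prop1}) and has constant speed, so the affine parametrization on $[0,1]$ is proportional to arclength --- you flag this bookkeeping yourself, and it is the only place where the Hadamard hypothesis (rather than a general Riemannian manifold) enters besides \eqref{eq less}. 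The argument has the virtue of using only Lemma \ref{lem tri} and \eqref{eq less}, both already set up in Section \ref{sec2}, so it makes the paper self-contained where the authors chose to defer to the literature.
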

In particular, for each $y \in M$, the function $d(\cdot,y) : M \to \mathbb{R}$ is a geodesic convex function.
\begin{dfn}\label{def fejer}{\rm \cite{MR1928039}}
	Let $K$ be a nonempty subset of $M$ and $\{x_n\}$ be a sequence in $M$. Then $\{x_n\}$ is said to be {\it Fej\'{e}r convergent} with respect to $K$ if for all $p \in K$ and $n \in \mathbb{N},$
	$$d(x_{n+1},p) \leq d(x_n,p).$$
\end{dfn}
\begin{lem}\label{lem fejer}{\rm \cite{MR1928039}}
	Let $K$ be a nonempty subset of $M$ and $\{x_n\} \subset X$ be a sequence in $M$ such that $\{x_n\}$ be a Fej\'{e}r convergent with respect to $K$. Then the following hold:
	\begin{itemize}
		\item[{\rm (i)}] For every $p \in K$, $d(x_n,p)$ converges;
		\item[{\rm (ii)}] $\{x_n\}$ is bounded;
		\item[{\rm (iii)}] Assume that every cluster point of $\{x_n\}$ belongs to $K$. \\ Then $\{x_n\}$ converges to a point in $K$.
	\end{itemize}
\end{lem}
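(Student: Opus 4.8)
The plan is to treat the three assertions in order, at each step exploiting the monotonicity built into the definition of Fej\'er convergence together with the completeness of $M$. For (i), I would fix $p \in K$ and observe that the Fej\'er condition $d(x_{n+1},p) \le d(x_n,p)$ says exactly that the real sequence $\{d(x_n,p)\}$ is non-increasing. Since $d(x_n,p) \ge 0$ for every $n$, it is bounded below, so the monotone convergence theorem on $\mathbb{R}$ yields that $d(x_n,p)$ converges.

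For (ii), I would again fix some $p \in K$, which is possible because $K \ne \emptyset$, and use the monotonicity just established: $d(x_n,p)$ is bounded above by its first term. Hence every $x_n$ lies in a fixed closed metric ball centred at $p$, and so $\{x_n\}$ is bounded.

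For (iii), I would combine boundedness with the Hopf-Rinow theorem. Since $M$ is complete, its closed bounded subsets are compact, so the bounded sequence $\{x_n\}$ has a subsequence $\{x_{n_k}\}$ converging to some point $x^* \in M$; by construction $x^*$ is a cluster point of $\{x_n\}$, hence $x^* \in K$ by hypothesis. Now part (i) applies with the choice $p = x^*$: the full sequence $d(x_n,x^*)$ converges to some limit $\ell \ge 0$. Along the subsequence $d(x_{n_k},x^*) \to 0$, and since every subsequence of a convergent real sequence shares its limit, we must have $\ell = 0$. Therefore $d(x_n,x^*) \to 0$, i.e.\ $\{x_n\}$ converges to $x^* \in K$.

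The argument is nearly self-contained once (i) is available; the sole external ingredient is the Hopf-Rinow property that closed bounded sets in a complete Riemannian manifold are compact, which is exactly what supplies the convergent subsequence in (iii). The one genuinely delicate point---and the place where the cluster-point hypothesis is indispensable---is the upgrade from ``a subsequence converges to $x^*$'' to ``the whole sequence converges to $x^*$''. This step is false for arbitrary sequences, but succeeds here because (i) has already forced $d(x_n,x^*)$ to converge, so its limit is completely determined by the single subsequential value $0$.
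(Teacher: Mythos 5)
Your proof is correct and is exactly the standard argument: monotone bounded real sequences converge for (i), the bound $d(x_n,p)\le d(x_0,p)$ for (ii), and Hopf--Rinow plus the convergence of $d(x_n,x^*)$ forced by (i) to upgrade subsequential convergence to full convergence in (iii). The paper itself states this lemma without proof, citing Ferreira and Oliveira, and your argument coincides with the proof given there, so there is nothing to add.
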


Recall that for all $x,y \in \mathbb{R}^2$,
$$\|tx + (1-t)y\|^2 = t\|x\|^2 + (1-t)\|y\|^2 -t(1-t)\|x-y\|^2, \quad \forall t \in [0,1]$$

Given $K$ be a nonempty subset of $M$. Let   $\mathfrak{X}(K)$ denote to the set of all multivalued vector fields $A: K \to 2^{TM}$ such that $A(x) \subseteq T_xM$ for each $x \in K,$ and denote $D(A)$  the domain of $A$ defined by $D(A) = \{x \in K : A(x) \neq \emptyset\}.$

\begin{dfn}\label{def monotone setvalued} \cite{MR1787612}
	A vector field $A \in \mathfrak{X}(K)$  is said to be 
	\begin{itemize}
		\item[{\rm (i)}] {\it monotone} if for all $x,y \in D(A)$
		$$\langle u, \exp^{-1}_xy\rangle \leq \langle v, - \exp^{-1}_yx\rangle, \quad \forall u \in A(x) \ {\rm and} \ \forall v \in A(y);$$
		\item[{\rm (ii)}] {\it maximal monotone}  if it is monotone and for all $x \in K$ and $u \in T_xK$, the condition
		$$\langle u, \exp^{-1}_xy\rangle \leq \langle v, - \exp^{-1}_yx\rangle, \quad \forall y \in D(A) \ {\rm and} \ \forall v \in A(y),$$
		implies that $u \in A(x).$
	\end{itemize}
\end{dfn}

The concept of Kuratowski semicontinuity on Hadamard manifolds was introduced by Li et al. \cite{MR2506692}.
\begin{dfn}\label{def kuratowski} \cite{MR2506692}
	Let a vector field $A \in \mathfrak{X}(K)$  and $x_0 \in K$. Then $A$ is said to be {\it upper Kuratowski semicontinuous at $x_0$} if for any sequences $\{x_n \} \subseteq K$ and $\{v_n\} \subset TM$ with each $v_n \in A(x_n)$, the relations $\lim_{n \to \infty}x_n = x_0$ and $\lim_{n \to \infty}v_n = v_0$ imply that $v_0 \in A(x_0)$. Moreover, $A$ is said to be  {\it upper Kuratowski semicontinuous on $K$} if it is upper Kuratowski semicontinuous for each $x \in K$.
\end{dfn}


%

The definition of the resolvent  of a multivalued vector field and firmly nonexpansive mappings on Hadamard manifolds was introduced by  Li et al. \cite{MR2824431}.

\begin{dfn}\label{def resolvent} {\rm \cite{MR2824431}}
	Let a vector field $A \in \mathfrak{X}(K)$ and $\lambda \in (0,\infty)$. The $\lambda$-{\it resolvent}  of $A$  is a multivalued map $J^A_{\lambda} : K \to 2^K$ defined by
	$$J_{\lambda}^A(x) := \{z \in K : x \in \exp_{z}\lambda A(z) \}, \quad \forall x \in K.$$
\end{dfn}

\begin{rem}{\rm \cite{MR2824431}}\label{rem resolvent}
	Let $\lambda > 0.$ By the definition of the resolvent of a vector field, then the range of the resolvent $J^A_\lambda$ is contained the domain of $A$ and ${\rm Fix}(J^A_\lambda) = A^{-1}({\bf 0}).$
\end{rem}

\begin{dfn}\label{def fimly} {\rm \cite{MR2824431}}
	Let $K$ be a nonempty subset of $M$ and $T : K  \to M$ be a mapping. Then $T$ is called {\it firmly nonexpansive} if for all $x,y \in K$, the function $\Phi : [0,1] \to [0, \infty]$ defined by 
	$$\Phi(t) := d(\exp_x t \exp^{-1}_xTx, \exp_y t \exp^{-1}_yTy), \quad \forall t \in [0,1],$$
	is nonincreasing.
\end{dfn}
A mapping $T : K \to K$ is called {\it nonexpansive} if 
$d(T(x),T(y)) \leq d(x,y),$
for all $x,y \in K$, where $d(x,y)$ is a Riemannian distance. By definition of firmly nonexpansive, it easy to see that any firmly nonexpansive mapping is nonexpansive mapping. 

\begin{thm}\label{thm resolvent is fimly} {\rm \cite{MR2824431}}
	Let a vector field $ A \in \mathfrak{X}(K)$. The following assertions hold for any $\lambda >0$
	\begin{itemize}
		\item[(i)] The vector field $A$ is monotone if and only if $J^A_\lambda$ is single-valued and firmly nonexpansive.
		\item[(ii)] If $D(A) = K$, the vector field $A$ is maximal monotone if and only if $J^A_\lambda$ is single-valued, firmly nonexpansive and the domain $D(J^A_\lambda) = K$.
	\end{itemize}
	
\end{thm}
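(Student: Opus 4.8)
The plan is to reduce both equivalences to a single scalar inequality produced by monotonicity together with the comparison inequality \eqref{eq tri3}. Note throughout that $z\in J^A_\lambda(x)$ means exactly $\tfrac1\lambda\exp_z^{-1}x\in A(z)$. For the forward implication of (i), I would first prove single-valuedness. Given $z_1,z_2\in J^A_\lambda(x)$, set $u_i=\tfrac1\lambda\exp_{z_i}^{-1}x\in A(z_i)$; monotonicity then gives $\langle\exp_{z_1}^{-1}x,\exp_{z_1}^{-1}z_2\rangle+\langle\exp_{z_2}^{-1}x,\exp_{z_2}^{-1}z_1\rangle\le0$. Applying \eqref{eq tri3} of Proposition \ref{prop2} to the geodesic triangle $\bigtriangleup(x,z_1,z_2)$ at the two vertices $z_1$ and $z_2$ and summing the resulting inequalities, the distance terms cancel and one obtains $d^2(z_1,z_2)\le\langle\exp_{z_1}^{-1}x,\exp_{z_1}^{-1}z_2\rangle+\langle\exp_{z_2}^{-1}x,\exp_{z_2}^{-1}z_1\rangle\le0$, so $z_1=z_2$.

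For firm nonexpansiveness write $z_x=J^A_\lambda x$, $z_y=J^A_\lambda y$ and consider $\Phi(t)=d(\gamma_1(t),\gamma_2(t))$ with $\gamma_1(t)=\exp_x t\exp_x^{-1}z_x$ and $\gamma_2(t)=\exp_y t\exp_y^{-1}z_y$. By Proposition \ref{prop dis geodesic} the distance is geodesically convex on the product $M\times M$, so $\Phi$ is convex on $[0,1]$; a nonnegative convex function is nonincreasing iff its left derivative at $t=1$ is nonpositive. Putting $f=\tfrac12\Phi^2$ and using the first variation formula with $\nabla_p\tfrac12 d^2(p,q)=-\exp_p^{-1}q$ and $\gamma_1'(1)=-\exp_{z_x}^{-1}x$, $\gamma_2'(1)=-\exp_{z_y}^{-1}y$, I get
\[ f'(1)=\langle\exp_{z_x}^{-1}z_y,\exp_{z_x}^{-1}x\rangle+\langle\exp_{z_y}^{-1}z_x,\exp_{z_y}^{-1}y\rangle, \]
which monotonicity makes $\le0$; hence $\Phi'(1^-)\le0$ and $\Phi$ is nonincreasing, so $J^A_\lambda$ is firmly nonexpansive. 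The converse in (i) runs the same computation backwards: for $u_i\in A(x_i)$ put $y_i=\exp_{x_i}\lambda u_i$, so $x_i\in J^A_\lambda(y_i)$ and, by single-valuedness, $J^A_\lambda y_i=x_i$ with $\exp_{x_i}^{-1}y_i=\lambda u_i$; firm nonexpansiveness forces $\Phi'(1^-)\le0$ for the associated $\Phi$, and reading $f'(1)\le0$ off the first variation formula is exactly $\langle u_1,\exp_{x_1}^{-1}x_2\rangle\le\langle u_2,-\exp_{x_2}^{-1}x_1\rangle$, i.e.\ monotonicity of $A$.

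For (ii), the implication ``single-valued $+$ firmly nonexpansive $+$ $D(J^A_\lambda)=K$ $\Rightarrow$ maximal monotone'' uses (i) for monotonicity and then establishes maximality by reusing the single-valuedness computation. Given $x\in K$ and $u\in T_xK$ satisfying the inequality in Definition \ref{def monotone setvalued}(ii), set $\bar x=\exp_x\lambda u$ and $z=J^A_\lambda\bar x$, so $\tfrac1\lambda\exp_z^{-1}\bar x\in A(z)$. Substituting $y=z$ and $v=\tfrac1\lambda\exp_z^{-1}\bar x$ into that inequality yields $\langle\exp_x^{-1}\bar x,\exp_x^{-1}z\rangle+\langle\exp_z^{-1}\bar x,\exp_z^{-1}x\rangle\le0$, while applying \eqref{eq tri3} to $\bigtriangleup(x,\bar x,z)$ at $x$ and $z$ and summing gives the reverse bound $d^2(x,z)\le\langle\exp_x^{-1}\bar x,\exp_x^{-1}z\rangle+\langle\exp_z^{-1}\bar x,\exp_z^{-1}x\rangle$. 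Hence $d^2(x,z)\le0$, so $z=x$ and $u=\tfrac1\lambda\exp_x^{-1}\bar x\in A(x)$, proving maximality.

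The reverse implication of (ii) leaves only one genuinely new statement, and I expect it to be the main obstacle: that $A$ maximal monotone with $D(A)=K$ forces $D(J^A_\lambda)=K$, i.e.\ that for every $x\in K$ there exists $z$ with $\tfrac1\lambda\exp_z^{-1}x\in A(z)$. This is the Hadamard-manifold analogue of Minty's surjectivity theorem and is the one step that does not reduce to the comparison inequality \eqref{eq tri3}; I would prove it by recasting the equation as an equilibrium/variational problem for the perturbed field $z\mapsto A(z)-\tfrac1\lambda\exp_z^{-1}x$ and invoking an existence result for maximal monotone fields, using the upper Kuratowski semicontinuity of Definition \ref{def kuratowski} to close the solution set. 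A secondary point to monitor throughout (ii) — and already in the converse of (i) — is that the auxiliary points $\exp_x\lambda u$ and $\exp_{x_i}\lambda u_i$ must lie in $K=D(J^A_\lambda)$ for the resolvent to apply; this is automatic when $K=M$ and otherwise must be arranged through the geodesic convexity of $K$ and the placement of $u$ in the tangent cone.
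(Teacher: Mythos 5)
First, note that the paper itself offers no proof of this theorem: it is quoted verbatim from Li, L\'opez, Mart\'in-M\'arquez and Wang \cite{MR2824431}, so there is no in-paper argument to compare yours against, and I can only judge the proposal on its own terms and against the cited source. For part (i), and for the maximality half of part (ii), your computations are correct and essentially the source's: single-valuedness and the maximality step both come from summing \eqref{eq tri3} over two vertices of $\bigtriangleup(x,z_1,z_2)$ to get $d^2(z_1,z_2)\leq\langle\exp^{-1}_{z_1}x,\exp^{-1}_{z_1}z_2\rangle+\langle\exp^{-1}_{z_2}x,\exp^{-1}_{z_2}z_1\rangle$ and then killing the right-hand side with monotonicity, and your first-variation identity $f'(1)=\langle\exp^{-1}_{z_x}x,\exp^{-1}_{z_x}z_y\rangle+\langle\exp^{-1}_{z_y}y,\exp^{-1}_{z_y}z_x\rangle$ is exactly the endpoint characterization of firm nonexpansiveness that \cite{MR2824431} isolates as a separate proposition (of which the paper's Proposition \ref{prop firmly = 0} is the fixed-point special case). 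Using Proposition \ref{prop dis geodesic} to get convexity of $\Phi$ and then arguing via the left derivative at $t=1$ is a clean way to close that loop, including the degenerate case $\Phi(1)=0$.

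The genuine gap is the one you flag but do not fill: the implication that $A$ maximal monotone with $D(A)=K$ forces $D(J^A_\lambda)=K$. This Minty-type surjectivity statement is the entire substance of the reverse direction of (ii), and it cannot be extracted from \eqref{eq tri3} or from any monotonicity inequality alone; in \cite{MR2824431} it is a separate theorem whose proof needs topological input (upper Kuratowski semicontinuity and local boundedness of maximal monotone fields, plus a KKM/fixed-point existence argument on geodesic convex sets). Saying you would ``recast it as an equilibrium problem and invoke an existence result'' names the right target but is not a proof, so as written your part (ii) establishes only the easy direction. The secondary domain issue you raise is also real rather than cosmetic: the auxiliary points $\exp_{x}\lambda u$ and $\exp_{x_i}\lambda u_i$ need not lie in $K$ when $K\neq M$, and since the paper's Definition \ref{def resolvent} only defines $J^A_\lambda$ on $K$, the converse of (i) and the maximality step of (ii) as written require either $K=M$ or an extension of the resolvent to all of $M$ (which is how the source sets things up); this should be resolved, not merely monitored.
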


\begin{prop}\label{prop firmly = 0}  {\rm  \cite{MR2824431}}
	Let $K$ be a nonempty subset of $M$ and $T : K \to M$ be a firmly nonexpansive mapping. Then
	$$\langle \exp^{-1}_{Ty}x, \exp_{Ty}^{-1}y\rangle \leq 0$$
	holds for any $x \in {\rm Fix}(T)$ and any $y \in K$.
\end{prop}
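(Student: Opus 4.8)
The plan is to use the fact that $x$ is a fixed point to collapse the first of the two geodesics in the definition of firm nonexpansiveness, and then to read off the desired inequality as the sign of a one-sided derivative. First I would note that since $x \in \mathrm{Fix}(T)$ we have $Tx = x$, so $\exp_x^{-1}Tx = \mathbf 0$ and the curve $t \mapsto \exp_x(t\exp_x^{-1}Tx)$ is the constant point $x$. Writing $\gamma(t) := \exp_y(t\exp_y^{-1}Ty)$ for the unique normalized geodesic from $y = \gamma(0)$ to $Ty = \gamma(1)$ (Proposition~\ref{prop1}), the function $\Phi$ of Definition~\ref{def fimly} reduces to $\Phi(t) = d(x,\gamma(t))$. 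Firm nonexpansiveness of $T$ says precisely that this $\Phi$ is nonincreasing on $[0,1]$, and hence so is $\varphi(t) := \tfrac12\,\Phi(t)^2 = \tfrac12\,d^2(x,\gamma(t))$.

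The key step is to compute $\varphi'(1)$. On a Hadamard manifold $\exp_x$ is a diffeomorphism (Proposition~\ref{prop1}), so $d^2(x,q) = \|\exp_x^{-1}q\|^2$ is a smooth function of $q$; composing with the smooth geodesic $\gamma$ shows that $\varphi$ is smooth on $[0,1]$, so no one-sided subtleties arise. I would then invoke the standard first-variation (gradient) formula for the squared distance on a Hadamard manifold, namely $\nabla_q\bigl[\tfrac12 d^2(x,\cdot)\bigr] = -\exp_q^{-1}x$, to obtain $\varphi'(t) = \langle -\exp_{\gamma(t)}^{-1}x,\ \gamma'(t)\rangle$. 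Evaluating at $t=1$ and using that the reversed geodesic from $Ty$ to $y$ has initial velocity $-\gamma'(1)$, i.e. $\exp_{Ty}^{-1}y = -\gamma'(1)$, gives $\varphi'(1) = \langle -\exp_{Ty}^{-1}x,\ -\exp_{Ty}^{-1}y\rangle = \langle \exp_{Ty}^{-1}x,\ \exp_{Ty}^{-1}y\rangle$. Finally, since $\varphi$ is nonincreasing and differentiable on $[0,1]$, its derivative at the right endpoint satisfies $\varphi'(1)\le 0$, which is exactly the claimed inequality $\langle \exp_{Ty}^{-1}x,\exp_{Ty}^{-1}y\rangle \le 0$.

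I expect the main obstacle to be the careful bookkeeping of signs and orientations in the second paragraph, in particular justifying the gradient identity $\nabla_q[\tfrac12 d^2(x,\cdot)] = -\exp_q^{-1}x$ and the relation $\gamma'(1) = -\exp_{Ty}^{-1}y$, since an error in either would flip the inequality. A purely metric alternative that avoids differentiation would be to pass to a comparison triangle (Lemma~\ref{lem tri}) for $\bigtriangleup(x,y,Ty)$ and combine the CAT($0$) estimate \eqref{eq less} with the monotonicity of $\Phi$; however, since \eqref{eq angle} only bounds the manifold inner product $\langle \exp_{Ty}^{-1}x,\exp_{Ty}^{-1}y\rangle$ from \emph{below} by its Euclidean counterpart, that route does not by itself deliver the upper bound $\le 0$, so the variational argument above appears to be the cleaner one.
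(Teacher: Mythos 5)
The paper does not prove this proposition: it is imported verbatim from the cited reference \cite{MR2824431}, so there is no in-paper argument to compare against. Your proof is correct and self-contained, and it is essentially the standard argument behind the cited result: with $Tx=x$ the first geodesic in Definition~\ref{def fimly} degenerates to the constant $x$, so firm nonexpansiveness says $t\mapsto d(x,\gamma(t))$ is nonincreasing along $\gamma(t)=\exp_y\bigl(t\exp_y^{-1}Ty\bigr)$, and differentiating $\tfrac12 d^2(x,\gamma(t))$ at $t=1$ via the gradient identity $\nabla_q\bigl[\tfrac12 d^2(x,\cdot)\bigr]=-\exp_q^{-1}x$ (valid globally on a Hadamard manifold since $\exp_x$ is a diffeomorphism) yields exactly $\langle \exp_{Ty}^{-1}x,\exp_{Ty}^{-1}y\rangle\le 0$. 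The sign bookkeeping checks out: $\gamma'(1)=-\exp_{Ty}^{-1}y$ because the reversed curve $s\mapsto\gamma(1-s)$ is the geodesic from $Ty$ to $y$, and the left derivative of a nonincreasing differentiable function at the right endpoint is indeed $\le 0$. Your closing remark is also apt: the CAT($0$)/comparison-triangle inequalities \eqref{eq angle}--\eqref{eq less} bound the Riemannian inner product from below by its Euclidean counterpart, which is the wrong direction for this conclusion, so the variational route is the right one. Two trivial quibbles: $\gamma$ as you parametrize it has speed $d(y,Ty)$ rather than $1$ (harmless, and consistent with the paper's own loose use of ``normalized''), and it is worth a sentence that the degenerate cases $y=Ty$ or $x=Ty$ give $0\le 0$ directly.
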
	

\begin{lem}\label{lem convergent resolvent} {\rm \cite{Al_Homidan_2019}}
	Let $K$ be a nonempty closed subset of $M$ and a vector field $A \in \mathfrak{X}(K)$ be a maximal monotone.
	Let $\{\lambda_n\} \subset (0,\infty)$ be a real sequence with $\lim_{n \to \infty}\lambda_n = \lambda >0$ and a sequence $\{x_n\} \subset K$ with $\lim_{n \to \infty}x_n = x \in K$ such that $\lim_{n \to \infty}J^A_{\lambda_n}(x_n) = y$. Then, $y = J_\lambda^A(x).$
\end{lem}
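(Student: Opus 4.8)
The plan is to unwind the defining relation of the resolvent into a statement about an associated family of tangent vectors, and then to exploit maximal monotonicity as a graph‑closedness property. First I would set $z_n := J^A_{\lambda_n}(x_n)$. Since $A$ is (in particular) monotone, Theorem~\ref{thm resolvent is fimly} tells us each $z_n$ is single‑valued, so the hypothesis $z_n \to y$ is meaningful, and because $K$ is closed we get $y \in K$. By Definition~\ref{def resolvent}, the relation $z_n = J^A_{\lambda_n}(x_n)$ means $x_n \in \exp_{z_n}\lambda_n A(z_n)$, so there is a uniquely determined vector
$$v_n := \frac{1}{\lambda_n}\exp^{-1}_{z_n}x_n \in A(z_n).$$

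Next I would pass to the limit in the tangent bundle. By Proposition~\ref{prop1} the map $(p,q)\mapsto \exp^{-1}_p q$ is smooth on $M\times M$, so $z_n \to y$ and $x_n \to x$ give $\exp^{-1}_{z_n}x_n \to \exp^{-1}_y x$ in $TM$; together with $\lambda_n \to \lambda > 0$ this yields $v_n \to v$, where
$$v := \frac{1}{\lambda}\exp^{-1}_y x \in T_yM.$$
Now I would use the monotonicity inequality and let $n\to\infty$. For every $n$, every $w\in D(A)$ and every $t\in A(w)$, Definition~\ref{def monotone setvalued}(i) gives $\langle v_n,\exp^{-1}_{z_n}w\rangle \le \langle t,-\exp^{-1}_w z_n\rangle$. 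Smoothness of the Riemannian metric, together with $z_n\to y$ and $v_n\to v$, makes both sides converge, so in the limit
$$\langle v,\exp^{-1}_y w\rangle \le \langle t,-\exp^{-1}_w y\rangle,\qquad \forall w\in D(A),\ \forall t\in A(w).$$
Maximal monotonicity of $A$ (Definition~\ref{def monotone setvalou...}ref{def monotone setvalued}(ii)) then forces $v\in A(y)$.

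Finally, from $v=\tfrac{1}{\lambda}\exp^{-1}_y x\in A(y)$ we obtain $x=\exp_y\lambda v\in \exp_y\lambda A(y)$, i.e.\ $y\in J^A_\lambda(x)$; since $J^A_\lambda$ is single‑valued by Theorem~\ref{thm resolvent is fimly}, we conclude $y=J^A_\lambda(x)$.

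I expect the main delicacy to be the limiting argument in the third step: the vectors $v_n$ live in the \emph{varying} tangent spaces $T_{z_n}M$, and one must justify both that $v_n\to v\in T_yM$ has a well‑defined meaning in $TM$ and that the pairings $\langle v_n,\exp^{-1}_{z_n}w\rangle$ and $\langle t,-\exp^{-1}_w z_n\rangle$ depend continuously on the base points $z_n$. This is exactly where the smoothness of $\exp^{-1}$ (Proposition~\ref{prop1}) and of the metric are essential; everything else is a routine substitution of the explicit form $v=\tfrac1\lambda\exp^{-1}_y x$ back into the resolvent definition.
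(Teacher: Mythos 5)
Your argument is correct, and it is essentially the standard proof of this lemma (the paper itself only quotes it from \cite{Al_Homidan_2019} without reproducing a proof): unwind $z_n=J^A_{\lambda_n}(x_n)$ into $\frac{1}{\lambda_n}\exp^{-1}_{z_n}x_n\in A(z_n)$, pass to the limit using smoothness of $(p,q)\mapsto\exp^{-1}_pq$, and conclude $\frac{1}{\lambda}\exp^{-1}_{y}x\in A(y)$. The only cosmetic difference is that the cited source invokes the upper Kuratowski semicontinuity of maximal monotone vector fields (Definition~\ref{def kuratowski}, due to Li et al.) to close the graph, whereas you reprove that closedness directly from Definition~\ref{def monotone setvalued}(ii) by taking limits in the monotonicity inequality --- which is the same argument one level lower.
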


Let $K$ be a nonempty closed geodesic convex set in $M$ and $F : K \times K \to \mathbb{R}$ be a bifunction. We suppose the following assumptions:
\begin{itemize}
	\item[{\rm (A1)}] for all $x \in K$, $f(x,x) \geq 0;$ 
	\item[{\rm (A2)}] $F$ is monotone, that is, for all $x,y \in K,$ $F(x,y) + F(y,x) \leq 0;$
	\item[{\rm (A3)}] For every $y \in K$, $x \mapsto F(x,y)$ is upper semicontinuous;
	\item[{\rm (A4)}] For every $x \in K$, $y \mapsto F(x,y)$ are geodesic convex and lower semicontinuous;
	\item[{\rm (A5)}] $x \mapsto F(x,x)$ is lower semicontinuous;
	\item[{\rm (A6)}] There exists a compact set $L \subseteq M$ such that
	$$x \in K \setminus L \Longrightarrow [\exists y \in K \cap L \ {\rm such \ that} \ F(x,y) < 0 ].$$
\end{itemize}
Calao et al. \cite{MR2869729}  introduced the concept of resolvent of a bifunction on Hadamard manifold as follows: let $F : K \times K \to \mathbb{R}$, the resolvent of a bifunction $F$ is  a multivalued operator $T^F_\lambda : M \to 2^K$ such that for all $x \in M$
$$T^F_{r}(x) = \{z \in K : F(z,y) -\frac{1}{r} \langle \exp^{-1}_{z}x, \exp^{-1}_zy \rangle \geq 0, \ \forall y \in K  \}. $$ 

\begin{thm} \label{thm resolvent bifunction} {\rm \cite{MR2869729}}
	Let  $F : K \times K \to \mathbb{R}$ be a bifunction satisfying the following conditions:
	\begin{itemize}
		\item[{\rm (1)}] $F$ is monotone;
		\item[{\rm (2)}] for all $r >0, $ $T^F_r$ is properly defined, that is, the domain $D(T^F_r) \neq \emptyset$.
	\end{itemize}   Then for any $r >0,$
	\begin{itemize} 
		\item[{\rm (i)}] the resolvent $T^F_r$ is single-valued;
		\item[{\rm (ii)}] the resolvent $T^F_r$ is firmly nonexpansive;
		\item[{\rm (iii)}] the fixed point set of $T^F_r$ is the  equilibrium point set of $F$, 
		$${\rm Fix}(T^F_r) = EP(F);$$
		\item[{\rm (iv)}] If $D(T^F_r)$ is closed and geodesic convex,  the equilibrium point set $EP(F)$ is closed and geodesic convex.
	\end{itemize}
\end{thm}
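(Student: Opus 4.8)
The plan is to follow the template established by Combettes and Hirstoaga for resolvents of equilibrium bifunctions in Hilbert spaces, replacing the linear operations by the exponential map and the comparison geometry of the Hadamard manifold. The common engine for (i) and (ii) is a single inequality extracted from monotonicity. Fix $x_1,x_2\in D(T^F_r)$ and set $z_i:=T^F_r(x_i)$. From the defining inequality of the resolvent, taking the test point $y=z_2$ in the inequality for $z_1$ and $y=z_1$ in the inequality for $z_2$, I obtain
$$F(z_1,z_2)\geq \tfrac1r\langle \exp^{-1}_{z_1}x_1,\exp^{-1}_{z_1}z_2\rangle, \qquad F(z_2,z_1)\geq \tfrac1r\langle \exp^{-1}_{z_2}x_2,\exp^{-1}_{z_2}z_1\rangle.$$
Adding these and invoking monotonicity of $F$ (so that $F(z_1,z_2)+F(z_2,z_1)\leq 0$) cancels the bifunction terms and leaves the key estimate
$$\langle \exp^{-1}_{z_1}x_1,\exp^{-1}_{z_1}z_2\rangle+\langle \exp^{-1}_{z_2}x_2,\exp^{-1}_{z_2}z_1\rangle\leq 0. \qquad (\star)$$
For single-valuedness (i) I specialize $(\star)$ to $x_1=x_2=x$, so that $z_1,z_2\in T^F_r(x)$. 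Applying the comparison inequality \eqref{eq tri3} to the geodesic triangle $\bigtriangleup(x,z_1,z_2)$ at the vertices $z_1$ and $z_2$ produces the two lower bounds $2\langle \exp^{-1}_{z_1}x,\exp^{-1}_{z_1}z_2\rangle\geq d^2(x,z_1)+d^2(z_1,z_2)-d^2(x,z_2)$ and $2\langle \exp^{-1}_{z_2}x,\exp^{-1}_{z_2}z_1\rangle\geq d^2(z_1,z_2)+d^2(x,z_2)-d^2(x,z_1)$. Summing them gives $\langle \exp^{-1}_{z_1}x,\exp^{-1}_{z_1}z_2\rangle+\langle \exp^{-1}_{z_2}x,\exp^{-1}_{z_2}z_1\rangle\geq d^2(z_1,z_2)$, which together with $(\star)$ forces $d^2(z_1,z_2)\leq 0$, i.e. $z_1=z_2$.

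The firm nonexpansiveness (ii) is where I expect the real work. By Definition~\ref{def fimly} I must show that $\Phi(t)=d(\gamma_1(t),\gamma_2(t))$ is nonincreasing, where $\gamma_i(t)=\exp_{x_i}t\exp^{-1}_{x_i}z_i$ is the geodesic from $x_i$ to $z_i=T^F_r(x_i)$. By Proposition~\ref{prop dis geodesic} the map $\Phi$ is convex on $[0,1]$, so it is nonincreasing precisely when its one-sided derivative at the right endpoint is nonpositive. Writing $\psi:=\tfrac12\Phi^2$ and using the standard fact that the gradient of $q\mapsto\tfrac12 d^2(p,q)$ equals $-\exp^{-1}_q p$ together with $\gamma_i'(1)=-\exp^{-1}_{z_i}x_i$, the first variation formula yields
$$\psi'(1)=\langle \exp^{-1}_{z_1}z_2,\exp^{-1}_{z_1}x_1\rangle+\langle \exp^{-1}_{z_2}z_1,\exp^{-1}_{z_2}x_2\rangle,$$
which is exactly the left-hand side of $(\star)$ and hence nonpositive. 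Since $\psi'(1)=\Phi(1)\Phi'(1)=d(z_1,z_2)\Phi'(1)$, this gives $\Phi'(1)\leq0$ (the case $z_1=z_2$ being trivial), and convexity propagates nonpositivity of $\Phi'$ to all of $[0,1]$. The delicate point, and the main obstacle, is justifying the differentiation of $\Phi$ and the first-variation identity on the manifold; an alternative, purely metric route is to argue directly from the CAT($0$) inequality \eqref{eq less} on comparison triangles, but the variational identification with $(\star)$ is the cleanest.

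Parts (iii) and (iv) are then formal. For (iii), if $p\in EP(F)$ then choosing $z=x=p$ in the resolvent definition and using $\exp^{-1}_p p=\mathbf 0$ shows that $F(p,y)\geq0$ is exactly the defining inequality, so $p\in T^F_r(p)$; conversely if $p=T^F_r(p)$ then the same substitution annihilates the inner-product term and leaves $F(p,y)\geq0$ for all $y$, so $p\in EP(F)$. Hence ${\rm Fix}(T^F_r)=EP(F)$. For (iv) I use that $T^F_r$ is nonexpansive (being firmly nonexpansive) and that $EP(F)={\rm Fix}(T^F_r)$: closedness follows from continuity of $T^F_r$ and closedness of $D(T^F_r)$; geodesic convexity follows by taking $p,q\in{\rm Fix}(T^F_r)$ and $z$ on the geodesic $[p,q]$, whence the nonexpansive estimates $d(p,T^F_r z)\leq d(p,z)$ and $d(q,T^F_r z)\leq d(q,z)$ combine with the triangle inequality to force the equality $d(p,T^F_r z)+d(T^F_r z,q)=d(p,q)$, and uniqueness of minimal geodesics (Proposition~\ref{prop1}) then pins $T^F_r z=z$.
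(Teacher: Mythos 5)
The paper does not prove this theorem: it is imported verbatim (with citation) from Colao et al.\ \cite{MR2869729}, so there is no in-paper proof to compare against. Judged on its own, your argument is correct and follows the same skeleton as the original source: the inequality $(\star)$ obtained by cross-substituting the two resolvent inequalities and invoking monotonicity is exactly the engine used there, and your derivation of single-valuedness from $(\star)$ plus the comparison inequality \eqref{eq tri3} applied at the two vertices $z_1,z_2$ is the standard one. The one place you genuinely deviate is part (ii): Colao et al.\ do not differentiate $\Phi$ but instead quote the characterization from Li et al.\ \cite{MR2824431} that a map $T$ is firmly nonexpansive if and only if $\langle \exp^{-1}_{Tx}Ty,\exp^{-1}_{Tx}x\rangle+\langle \exp^{-1}_{Ty}Tx,\exp^{-1}_{Ty}y\rangle\leq 0$ for all $x,y$ (of which Proposition \ref{prop firmly = 0} in this paper is the one-directional shadow), so $(\star)$ finishes the proof in one line. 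Your first-variation route in effect reproves the hard direction of that characterization; it is sound, since on a Hadamard manifold $q\mapsto\frac12 d^2(p,q)$ is smooth with gradient $-\exp^{-1}_q p$, so the identity $\psi'(1)=\langle\exp^{-1}_{z_1}z_2,\exp^{-1}_{z_1}x_1\rangle+\langle\exp^{-1}_{z_2}z_1,\exp^{-1}_{z_2}x_2\rangle$ is legitimate, and convexity of $\Phi$ (Proposition \ref{prop dis geodesic}) does propagate $\Phi'(1)\leq0$ backwards; the degenerate case $z_1=z_2$ is handled by convexity plus $\Phi(1)=0$ as you say. Parts (iii) and (iv) are the standard arguments (the convexity step in (iv) correctly uses that equality in the triangle inequality together with uniqueness of geodesics pins $T^F_r z$ to $z$), and you correctly identify where closedness and geodesic convexity of $D(T^F_r)$ enter. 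In short: correct, with a self-contained variational substitute for the cited firm-nonexpansiveness criterion.
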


\begin{thm} \label{thm domain resolvent bifunction} {\rm \cite{MR3912856}}
	Let  $F : K \times K \to \mathbb{R}$ be a bifunction satisfying the assumptions (A1)--(A3). Then $D(T^F_r) = M$.
\end{thm}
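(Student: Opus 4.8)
The plan is to read the statement as an existence result for a regularised equilibrium problem. Fix an arbitrary $x\in M$ and $r>0$; asserting $x\in D(T^F_r)$ means producing a point $z\in K$ with
\[
F(z,y)-\tfrac1r\langle \exp^{-1}_z x,\exp^{-1}_z y\rangle\ge 0\qquad\text{for all }y\in K .
\]
So I would introduce the regularised bifunction $G_x(z,y):=F(z,y)-\tfrac1r\langle \exp^{-1}_z x,\exp^{-1}_z y\rangle$ on $K\times K$ and reduce the theorem to solvability of the equilibrium problem $EP(G_x)$. The point to keep in mind throughout is that $x$ is allowed to be \emph{anywhere} in $M$, possibly outside $K$; accommodating that freedom is precisely what upgrades the conclusion from ``$D(T^F_r)\supseteq K$'' to ``$D(T^F_r)=M$'', and it is where the cited gap lies.

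Next I would verify the inexpensive hypotheses for $G_x$. On the diagonal $\exp^{-1}_z z=\mathbf 0$, so $G_x(z,z)=F(z,z)\ge 0$ by (A1). Upper semicontinuity in the first slot passes from $F$ to $G_x$ by (A3), since $z\mapsto \exp^{-1}_z(\cdot)$ is continuous and the regulariser is therefore continuous in $z$. For monotonicity, apply the comparison inequality \eqref{eq tri3} to the geodesic triangle $\bigtriangleup(x,z,y)$ at the two vertices $z$ and $y$ and add the resulting estimates to get
\[
\langle \exp^{-1}_z x,\exp^{-1}_z y\rangle+\langle \exp^{-1}_y x,\exp^{-1}_y z\rangle\ \ge\ d^2(z,y);
\]
combined with (A2) this yields
\[
G_x(z,y)+G_x(y,z)\le F(z,y)+F(y,z)-\tfrac1r d^2(z,y)\le -\tfrac1r d^2(z,y)\le 0,
\]
so $G_x$ is monotone (in fact strongly monotone), which will both localise and pin down the solution.

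The substantive step is a coercivity estimate followed by a Ky Fan / KKM existence argument. Fixing an anchor $y_0\in K$ and using \eqref{eq tri3} once more gives $\langle \exp^{-1}_z x,\exp^{-1}_z y_0\rangle\ge\frac12\bigl(d^2(z,x)+d^2(z,y_0)-d^2(x,y_0)\bigr)$, so the quadratic regulariser forces $G_x(z,y_0)\to-\infty$ as $d(z,y_0)\to\infty$, once $F(z,y_0)$ is majorised by an affine function of $d(z,y_0)$ (equivalently, via (A2), once $F(y_0,\cdot)$ admits an affine lower bound). This localisation replaces the coercivity hypothesis (A6) and confines every solution to a fixed compact geodesically convex set $C\subseteq K$. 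On $C$ I would run the KKM scheme: the sets $S(y):=\{z\in C:G_x(z,y)\ge 0\}$ are closed by the upper semicontinuity above, while monotonicity together with geodesic convexity of $y\mapsto G_x(z,y)$ make $\{S(y)\}_{y\in K}$ a KKM family, so $\bigcap_{y\in K}S(y)\neq\emptyset$ and any element is the desired $z$.

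The hard part is twofold, both manifold-specific. First, the coercivity bound must survive $x$ being arbitrarily far from $K$: this is the delicate item the corrected statement isolates, and it rests on controlling $F$ from below through its convexity and lower semicontinuity in the second variable rather than on (A6). Second, to invoke KKM one needs $y\mapsto G_x(z,y)$ geodesically convex and lower semicontinuous; the $F$-part follows from those same properties, but the regulariser $y\mapsto -\frac1r\langle \exp^{-1}_z x,\exp^{-1}_z y\rangle$ must be checked to be geodesically convex — transparent in the Euclidean model, where it is affine, yet requiring the CAT($0$) comparison on a general Hadamard manifold. Establishing the KKM intersection property from monotonicity, together with the closedness and compactness bookkeeping on $C$, is where the bulk of the careful work sits.
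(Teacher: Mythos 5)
First, a framing remark: the paper does not prove this statement at all --- Theorem \ref{thm domain resolvent bifunction} is imported verbatim from Wang, L\'opez, Li and Yao \cite{MR3912856}, so there is no in-paper argument to compare yours against; your proposal has to stand on its own. Its opening moves are fine: the reduction to the regularised problem $EP(G_x)$ is the standard one, and your derivation of strong monotonicity of $G_x$ from two applications of \eqref{eq tri3} is correct. But what you submit is a programme, not a proof, and the two items you park under ``the hard part'' are precisely where it breaks. The decisive one is the convexity of $y\mapsto -\frac1r\langle\exp^{-1}_z x,\exp^{-1}_z y\rangle$, which your KKM step needs in order to make $\{S(y)\}$ a KKM family. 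This is not a routine check ``requiring the CAT($0$) comparison'': on a genuinely curved Hadamard manifold $\exp^{-1}_z$ does not carry geodesics avoiding $z$ to line segments, and $y\mapsto\langle u,\exp^{-1}_z y\rangle$ is in general neither geodesically convex nor concave. This is exactly the gap in Colao et al.\ \cite{MR2869729} that \cite{MR3912856} was written to repair --- the present paper's introduction says as much --- so the theorem is being cited precisely because the step you call ``transparent in the Euclidean model'' is the one that fails off the model. A correct proof must establish the intersection property for the sets $S(y)$ by some other route; it cannot be harvested from convexity of the regulariser.

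Second, your argument repeatedly uses hypotheses that the statement does not grant. The coercivity estimate (``controlling $F$ from below through its convexity and lower semicontinuity in the second variable''), the KKM property, and the lower semicontinuity needed for closedness of $S(y)$ in the second slot are all instances of (A4), whereas the theorem assumes only (A1)--(A3). Without convexity of $y\mapsto F(z,y)$ the KKM family need not have the finite-intersection property, so the scheme does not close under the stated hypotheses. (In the source \cite{MR3912856} the corresponding result does assume convexity and lower semicontinuity in the second variable; the transcription here appears to have dropped it, and you should at least flag that your proof requires it.) Finally, even granting (A4), the ``affine lower bound'' for $F(y_0,\cdot)$ is itself a claim needing an argument on a manifold: convexity of $t\mapsto F(y_0,\gamma(t))$ along unit-speed geodesics from $y_0$, together with boundedness of $F(y_0,\cdot)$ on the unit sphere about $y_0$ in $K$, gives at worst linear decay, which does beat the quadratic regulariser --- but that is a step to be written, not asserted.
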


\begin{lem} \label{lem resolvent bifunction} {\rm \cite{MR3912856}}
	Let  $F : K \times K \to \mathbb{R}$ be a bifunction satisfying the assumptions (A1), (A3), (A4), (A5)  and (A6). Then there exists $z \in K$ such that
	$$F(z,y) -\frac{1}{r} \langle \exp^{-1}_{z}x, \exp^{-1}_zy \rangle \geq 0, \ \forall y \in K,$$
	for all $r >0$ and $x \in M$.
\end{lem}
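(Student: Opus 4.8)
The plan is to recognize that the assertion is exactly the statement that the resolvent value $T^F_r(x)$ is nonempty, i.e. that the perturbed bifunction
\[
\Phi(z,y) := F(z,y) - \frac{1}{r}\langle \exp^{-1}_z x, \exp^{-1}_z y\rangle
\]
admits an equilibrium point in $K$. So I would fix $x\in M$ and $r>0$ and look for $z\in K$ with $\Phi(z,y)\ge 0$ for all $y\in K$. First I would record the elementary features of $\Phi$ inherited from $F$: since $\exp^{-1}_z z=\mathbf{0}$ we have $\Phi(z,z)=F(z,z)\ge 0$ by (A1); the map $z\mapsto\Phi(z,y)$ is upper semicontinuous because $z\mapsto F(z,y)$ is (A3) and $z\mapsto\langle\exp^{-1}_z x,\exp^{-1}_z y\rangle$ is continuous; and $y\mapsto F(z,y)$ is geodesic convex and lower semicontinuous (A4).

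The core is an intersection argument of Knaster--Kuratowski--Mazurkiewicz type. I would set $G(y):=\{z\in K:\Phi(z,y)\ge 0\}$, which is closed by the upper semicontinuity just noted, and aim to prove $\bigcap_{y\in K}G(y)\ne\emptyset$. The decisive step is the finite intersection property: given $y_1,\dots,y_m\in K$, I would introduce the weighted Fr\'echet mean $\beta(\lambda)$ of $\{y_i\}$ (the unique minimizer of $z\mapsto\sum_i\lambda_i d^2(z,y_i)$, which exists by strict convexity on a Hadamard manifold), use it as the transport map $\Delta_{m-1}\to K$ in the classical simplex KKM lemma, and verify the covering condition on each face. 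Here the first-order optimality identity $\sum_i\lambda_i\exp^{-1}_{w}y_i=\mathbf{0}$ at $w=\beta(\lambda)$ is what saves the argument: if $\Phi(w,y_i)<0$ for every $i$ in the support of $\lambda$, then Jensen's inequality for the geodesically convex map $F(w,\cdot)$ gives $F(w,w)\le\sum_i\lambda_i F(w,y_i)<\frac1r\big\langle\exp^{-1}_w x,\ \sum_i\lambda_i\exp^{-1}_w y_i\big\rangle=0$, contradicting (A1). Hence each face satisfies $\Delta_I\subseteq\bigcup_{i\in I}\beta^{-1}(G(y_i))$, the simplex KKM lemma yields a common point, and $\bigcap_{i=1}^m G(y_i)\ne\emptyset$.

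When $K$ is compact the closed sets $G(y)$ are compact and the finite intersection property closes the proof at once. For the general (noncompact) case I would pass through a compact exhaustion $K_n:=K\cap \bar B(p_0,n)$ (each compact and geodesic convex, since $d(\cdot,p_0)$ is geodesic convex by Proposition \ref{prop dis geodesic}), solve the problem on each $K_n$ to obtain $z_n$, and then use the coercivity (A6) together with the law of cosines \eqref{eq tri3} to confine $\{z_n\}$: for $z_n$ far from $x$ the witness $y\in K\cap L$ of (A6) satisfies $F(z_n,y)<0$, while \eqref{eq tri3} forces $\langle\exp^{-1}_{z_n}x,\exp^{-1}_{z_n}y\rangle>0$, so $\Phi(z_n,y)<0$ --- contradicting $z_n\in\bigcap_{y\in K_n}G(y)$. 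Thus $\{z_n\}$ stays in the bounded set $L\cup \bar B(x,R_0)$, a subsequence converges to some $\bar z$, and upper semicontinuity of $z\mapsto\Phi(z,y)$ (from (A3), with (A4)--(A5) supplying the lower-semicontinuity used along the way) passes $\Phi(z_{n_k},y)\ge 0$ to $\Phi(\bar z,y)\ge 0$ for every $y\in K$.

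The step I expect to be the genuine obstacle --- and, I suspect, the precise point repaired relative to the earlier treatment --- is the KKM covering condition, because the regularizing term $y\mapsto\langle\exp^{-1}_z x,\exp^{-1}_z y\rangle$ is affine in the Euclidean model but is neither geodesic convex nor geodesic concave on a curved Hadamard manifold; consequently $\Phi(z,\cdot)$ need not be convex and the naive sublevel-set/quasiconvexity argument breaks down. The Fr\'echet-mean cancellation $\sum_i\lambda_i\exp^{-1}_w y_i=\mathbf{0}$ is exactly what neutralizes this term, and getting that identity to interact correctly with Jensen's inequality and (A1) is the technical heart. The coercivity bookkeeping in the noncompact case, where one must check that the perturbation does not destroy (A6), is the secondary difficulty and is handled by the law-of-cosines estimate.
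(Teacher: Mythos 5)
The paper offers no proof of this lemma: it is imported verbatim from Wang--L\'opez--Li--Yao \cite{MR3912856}, so there is no in-paper argument to measure yours against. Judged on its own terms, your plan is essentially sound, and it isolates the right difficulty: the perturbation $y\mapsto\langle\exp^{-1}_z x,\exp^{-1}_z y\rangle$ is neither geodesically convex nor concave on a curved Hadamard manifold, which is precisely the gap in Colao et al.\ \cite{MR2869729} that \cite{MR3912856} repairs, and your Fr\'echet-mean KKM map together with the first-order identity $\sum_i\lambda_i\exp^{-1}_{\beta(\lambda)}y_i=\mathbf 0$ is a legitimate way to neutralize that term. Three points must be made explicit before this is a complete proof. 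First, the inequality $F(w,\beta(\lambda))\le\sum_i\lambda_i F(w,y_i)$ at $w=\beta(\lambda)$ is Jensen's inequality for barycenters in a space of nonpositive curvature; it does not follow from two-point geodesic convexity alone and has to be invoked as a theorem (Sturm's Jensen inequality for NPC spaces) or proved via the inductive-mean construction --- this is the genuine technical heart and cannot be waved through. Second, closedness of the sets $\beta^{-1}(G(y_i))$ needs continuity of $\lambda\mapsto\beta(\lambda)$, which follows from the strong convexity of $z\mapsto\sum_i\lambda_i d^2(z,y_i)$, and you should also record that the unconstrained minimizer already lies in $K$ (nonexpansiveness of the metric projection onto the closed geodesically convex set $K$), since otherwise the vanishing-gradient identity would have to be replaced by a variational inequality and the cancellation would be lost. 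Third, the coercivity step is correct but the quantifiers need care: \eqref{eq tri3} yields $\langle\exp^{-1}_{z_n}x,\exp^{-1}_{z_n}y\rangle\ge\tfrac12\left[d^2(x,z_n)+d^2(z_n,y)-d^2(x,y)\right]\ge 0$ only once $d(x,z_n)\ge\sup_{y'\in K\cap L}d(x,y')$, and the witness $y$ supplied by (A6) must belong to the truncated set $K_n$, so the argument bounds $z_n$ only for all sufficiently large $n$ --- which suffices, but should be said. Finally, (A5) is never used in your argument; that is not an error, but it is worth flagging given that the lemma lists it among the hypotheses.
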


\section{Main Results}\label{sec3}
In this paper, $K$ always denotes  a nonempty closed geodesic convex subset of an Hadmard manifold $M$, unless explicitly stated otherwise.
Let $A \in \mathfrak{X}(K)$ and $F : K \times K \to \mathbb{R}$ be a bifunction. We consider the problem of finding $x \in K$ such that
\begin{equation}\label{eq problem}
	x \in EP(F) \cap A^{-1}({\bf 0}),
\end{equation}
that is, $x$ is simultaneously an equilibrium point of $F$ and  a singularity of $A$.   In this paper we will assume that  $D(T^F_r)$ is closed geodesic convex, then  the set $\Omega$ is closed and geodesic convex by Theorem \ref{thm resolvent bifunction} and $A^{-1}({\bf 0})$ is closed and geodesic convex.

\begin{alg}\label{alg 1}
	Let a vector field $A \in \mathfrak{X}(K) $ and $F : K \times K \to \mathbb{R}$ be a bifunction. Choose an initial point  $x_0 \in K$ and define $\{x_n\}$, $\{y_n\}$ and $\{z_n\}$ as follows:
	\begin{equation}\label{eq alg 1}
		y_n : = \exp_{x_n}\alpha_{n}\exp^{-1}_{x_n}J^A_{\lambda_n}(x_n),
	\end{equation}
	\begin{equation} \label{eq alg 2}
	z_n \in K \  {\rm such \ that} \	F(z_n,t) - \frac{1}{r_n}\langle \exp^{-1}_{z_n}y_n, \exp^{-1}_{z_n}t \rangle \geq 0, \quad \forall t \in K,
	\end{equation}
	\begin{equation} \label{eq alg 3}
x_{n+1} := \exp_{x_n}\beta_n\exp^{-1}_{x_n}z_n, \quad \forall n \in \mathbb{N},
	\end{equation}
	where $\{\alpha_n\}, \{\beta_n\}, \{\lambda_n\}$ and $\{r_n\}$ are given real positive sequences such that
	\begin{itemize}
		\item[{\rm (i)}] $0 < a \leq \alpha_n, \beta_n \leq b <1, \quad \forall n \in \mathbb{N}$,
		\item [{\rm(ii)}] $0 < \hat{\lambda}  \leq \lambda_n \leq \tilde{\lambda} < \infty, \quad \forall n \in \mathbb{N}$,
		\item [{\rm(iii)}] $\liminf_{n \to \infty} r_n > 0.$
	\end{itemize}
\end{alg}

When $F \equiv 0$, the Algorithm \eqref{alg 1} becomes  the following algorithm for finding a solution of the problem \eqref{eq inclusion 2}.
\begin{alg}\label{alg 2}
	Let a vector field $A \in \mathfrak{X}(K)$.  Choose initial point  $x_0 \in K$ and define $\{x_n\}$ as follows:
	\begin{eqnarray}\label{eq alg 4}
	x_{n+1} &: =&  \exp_{x_n}\alpha_{n}\exp^{-1}_{x_n}J^A_{\lambda_n}(x_n), \quad \forall n \in \mathbb{N},
	\end{eqnarray}
	where $\{\alpha_n\} \subset (0,1)$ and $\{\lambda_n\} \subset (0, \infty)$ are the same as in Algorithm \ref{alg 1}.
\end{alg}

When $A \equiv {\bf 0}$,  the Algorithm \eqref{alg 1} becomes  the following algorithm for finding a solution of the problem \eqref{eq EP}.
\begin{alg}\label{alg 3}
	Let  $F : K \times K \to \mathbb{R}$ be a bifunction. Choose initial point  $x_0 \in K$ and define $\{x_n\}$ and $\{z_n\}$ as follows:
	\begin{eqnarray} \label{eq alg 5}
	&&z_n \in K \  {\rm such \ that} \ F(z_n,t) - \frac{1}{r_n}\langle \exp^{-1}_{z_n}x_n, \exp^{-1}_{z_n}t \rangle \geq 0, \quad \forall t \in K, \notag \\
	&&x_{n+1} :=  \exp_{x_n}\beta_n\exp^{-1}_{x_n}z_n, \quad \forall n \in \mathbb{N},
	\end{eqnarray}
	where $\{\beta_n\} \subset (0,1)$ and $\{r_n\} \subset (0, \infty)$ are the same as in Algorithm \ref{alg 1}.
\end{alg}

\begin{thm}\label{thm 1}
	Let a vector field $A \in \mathfrak{X}(K)$ be a maximal monotone. Let $F : K \times K \to \mathbb{R}$ be a bifunction satisfies (A1)--(A6) and $T^F_{r_n}$ be the resolvent of $F$ for $\{r_n\} \subset (0,\infty) $ with $D(T^F_{r_n})$ is closed and geodesic convex set  such that $\Omega \neq \emptyset.$ Then the sequence generated by Algorithm \ref{alg 1} converges to a solution of problem \eqref{eq problem}. 
\end{thm}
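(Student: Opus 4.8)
The plan is to show that $\{x_n\}$ is Fej\'er convergent with respect to $\Omega = EP(F) \cap A^{-1}({\bf 0})$, to extract quantitative estimates forcing the successive iterates to coalesce, and finally to verify that every cluster point of $\{x_n\}$ lies in $\Omega$; the conclusion then follows at once from Lemma \ref{lem fejer}(iii). Throughout, fix an arbitrary $p \in \Omega$. By Remark \ref{rem resolvent} we have $p = J^A_{\lambda_n}(p)$, and by Theorem \ref{thm resolvent bifunction}(iii) we have $p = T^F_{r_n}(p)$; moreover both resolvents are firmly nonexpansive (Theorem \ref{thm resolvent is fimly} and Theorem \ref{thm resolvent bifunction}(ii)), hence nonexpansive, so that $d(J^A_{\lambda_n}(x_n), p) \le d(x_n, p)$ and $d(T^F_{r_n}(y_n), p) \le d(y_n, p)$.

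First I would record the strengthened squared-distance inequality: for a point $w = \exp_x t\exp^{-1}_x z$ on the geodesic from $x$ to $z$ and any $p \in M$, combining the Euclidean identity $\|tu + (1-t)v\|^2 = t\|u\|^2 + (1-t)\|v\|^2 - t(1-t)\|u-v\|^2$ with the comparison inequality \eqref{eq less} afforded by the CAT($0$) structure (Proposition \ref{prop2}) gives $d^2(w,p) \le (1-t)d^2(x,p) + t\,d^2(z,p) - t(1-t)d^2(x,z)$. Applying this to $y_n$ (with $t = \alpha_n$ and endpoints $x_n$, $J^A_{\lambda_n}(x_n)$) together with nonexpansiveness yields $d^2(y_n,p) \le d^2(x_n,p) - \alpha_n(1-\alpha_n)d^2(x_n, J^A_{\lambda_n}(x_n))$, a bound inherited by $d^2(z_n,p) \le d^2(y_n,p)$. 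Applying the inequality once more to $x_{n+1}$ (with $t = \beta_n$, endpoints $x_n$, $z_n$) produces
$$d^2(x_{n+1},p) \le d^2(x_n,p) - \beta_n\alpha_n(1-\alpha_n)d^2(x_n, J^A_{\lambda_n}(x_n)) - \beta_n(1-\beta_n)d^2(x_n, z_n).$$
Since the two subtracted terms are nonnegative, $\{x_n\}$ is Fej\'er convergent with respect to $\Omega$; by Lemma \ref{lem fejer}, $d(x_n,p)$ converges and $\{x_n\}$ is bounded. Feeding the convergence of $d(x_n,p)$ back into the displayed estimate and using the lower bounds on $\alpha_n(1-\alpha_n)$ and $\beta_n(1-\beta_n)$ from hypothesis (i), I obtain $d(x_n, J^A_{\lambda_n}(x_n)) \to 0$ and $d(x_n, z_n) \to 0$; since $d(x_n,y_n) = \alpha_n d(x_n, J^A_{\lambda_n}(x_n))$, also $d(x_n, y_n) \to 0$ and hence $d(y_n, z_n) \to 0$.

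Next I would analyze an arbitrary cluster point $\bar x$, say $x_{n_k} \to \bar x$. Because $d(x_n, J^A_{\lambda_n}(x_n)) \to 0$, we get $J^A_{\lambda_{n_k}}(x_{n_k}) \to \bar x$; passing to a further subsequence so that $\lambda_{n_k} \to \lambda_0 \in [\hat\lambda, \tilde\lambda]$ (possible by hypothesis (ii)), Lemma \ref{lem convergent resolvent} gives $\bar x = J^A_{\lambda_0}(\bar x)$, that is, $\bar x \in A^{-1}({\bf 0})$ by Remark \ref{rem resolvent}. For the equilibrium membership, recall that $z_n$ satisfies $F(z_n, t) \ge \frac{1}{r_n}\langle \exp^{-1}_{z_n}y_n, \exp^{-1}_{z_n}t\rangle$ for all $t \in K$. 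Since $z_{n_k} \to \bar x$ and $y_{n_k} \to \bar x$, the Cauchy--Schwarz bound $|\langle \exp^{-1}_{z_n}y_n, \exp^{-1}_{z_n}t\rangle| \le d(z_n, y_n)\,d(z_n, t)$ together with $\liminf_n r_n > 0$ forces the right-hand side to $0$ along $n_k$; taking $\limsup$ and invoking the upper semicontinuity of $x \mapsto F(x,t)$ from (A3) gives $F(\bar x, t) \ge \limsup_k F(z_{n_k}, t) \ge 0$ for every $t \in K$, so $\bar x \in EP(F)$. Thus every cluster point lies in $\Omega$, and Lemma \ref{lem fejer}(iii) yields convergence of $\{x_n\}$ to a point of $\Omega$.

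The routine part is the Fej\'er/boundedness bookkeeping. The two places demanding care are, first, the derivation of the strengthened squared-distance inequality with its negative $-t(1-t)d^2(x,z)$ term; this is exactly where the Hadamard/CAT($0$) geometry, rather than mere nonexpansiveness, is used, and it is what converts ``distances do not increase'' into ``consecutive iterates coalesce.'' Second, and more delicate, is the limit passage in the equilibrium inequality, where one must simultaneously control the vanishing inner-product term and apply semicontinuity in the correct direction. I expect this latter step to be the principal obstacle, since it is the only one that genuinely uses the full list of bifunction assumptions (A1)--(A6) and the precise hypothesis $\liminf_n r_n > 0$.
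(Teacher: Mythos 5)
Your proposal is correct and follows the same overall skeleton as the paper (Fej\'er monotonicity, vanishing of $d(x_n,z_n)$, $d(x_n,y_n)$, $d(x_n,J^A_{\lambda_n}x_n)$, then cluster points in $\Omega$ via Lemma \ref{lem fejer}(iii) and Lemma \ref{lem convergent resolvent}), but two of your sub-steps genuinely diverge from the paper's. First, for the decrease estimate at the $y_n$-stage the paper invokes firm nonexpansiveness of $J^A_{\lambda_n}$ through Proposition \ref{prop firmly = 0} (the obtuse-angle inequality $\langle \exp^{-1}_{u_n}\omega,\exp^{-1}_{u_n}x_n\rangle\le 0$) to obtain $d^2(y_n,\omega)\le d^2(x_n,\omega)-\alpha_n d^2(x_n,u_n)$, whereas you use only plain nonexpansiveness combined with the CAT($0$) convexity of the squared distance, obtaining the weaker coefficient $\alpha_n(1-\alpha_n)$; since $\alpha_n\le b<1$ this is just as good for the asymptotic-regularity conclusions, and your route is marginally more economical in hypotheses (it never needs Proposition \ref{prop firmly = 0}), at the price of a slightly smaller drop. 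Second, and more substantially, for $\bar x\in EP(F)$ the paper first applies monotonicity (A2) to flip the resolvent inequality into $-\tfrac{1}{r_n}\langle\cdot,\cdot\rangle\ge F(y,z_n)$, passes to the limit using lower semicontinuity in the second argument, and then runs a Minty-type geodesic argument using (A1), (A4) and (A3) to return to $F(x^*,y)\ge 0$; you instead pass to the limit directly in $F(z_{n_k},t)\ge \tfrac{1}{r_{n_k}}\langle \exp^{-1}_{z_{n_k}}y_{n_k},\exp^{-1}_{z_{n_k}}t\rangle$ using Cauchy--Schwarz and the full upper semicontinuity (A3) of $x\mapsto F(x,t)$. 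This is legitimate under the paper's hypotheses and is shorter, bypassing (A2) and (A4) at this stage entirely; the paper's longer detour is the one that would survive if (A3) were weakened to the hemicontinuity along geodesics that is standard in the Hilbert-space equilibrium literature, which is presumably why the authors keep it. Both arguments are sound; yours simply exploits the strength of (A3) more directly.
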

\begin{proof}
	It is  sufficient to show by Lemma \ref{lem fejer} that $\{x_n\}$ is Fej\'{e}r convergent with respect  to $\Omega$ and the cluster points of $\{x_n\}$ belongs to $\Omega$. We divide the proof into the following four steps.\\
	
	\framebox{{\bf Step I.}  We show that $\{x_n\}$ is Fej\'{e}r convergent with respect to $\Omega$.} \\
	Let $\omega \in \Omega$. Then $\omega \in EP(F)$ and $\omega \in A^{-1}({\bf 0})$. By Theorem \ref{thm resolvent bifunction} and Lemma \ref{lem resolvent bifunction}, we have $z_n = T^F_{r_n}(y_n)$ and
	\begin{eqnarray}\label{eq thm 1}
		d(z_n,\omega) &=& d(T^F_{r_n}(y_n), T^F_{r_n}(\omega)) \notag \\
		&\leq& d(y_n,\omega), \quad \text{for} \ \omega \in \Omega.
	\end{eqnarray}
	Since $\omega \in A^{-1}({\bf 0})$, Remark \ref{rem resolvent} gives $\omega = J^A_{\lambda_n}(\omega)$. Set $u_n := J^A_{\lambda_n}(x_n)$ and let $\bigtriangleup \left(\omega, x_n, u_n\right)$ $\subseteq M$  be  a geodesic triangle with vertices $\omega, x_n$ and $u_n$, and let $\bigtriangleup\left(\overline{\omega},\overline{x_n}, \overline{u_n} \right) \subseteq \mathbb{R}^2$ be a comparison triangle. Then, we have
	\begin{equation}\label{eq thm 2}
	d(x_n,\omega) = \| \overline{x_n} - \overline{\omega} \|, \quad d\left(x_n, u_n\right) = \left\|\overline{x_n} - \overline{u_n}\right\|, \quad d\left(u_n, \omega\right) = \left\|\overline{u_n} - \overline{\omega}\right\|.
	\end{equation}
	Recall from \eqref{eq alg 1} that $	y_n = \exp_{x_n}\alpha_{n}\exp^{-1}_{x_n}u_n$, then we have
	$$\overline{y_n} = (1-\alpha_n) \overline{x_n} + \alpha_n\overline{u_n}.$$
	From  \eqref{eq angle} and \eqref{eq less}, we get
	\begin{equation}\label{eq thm 3}
	\angle_\omega\left(u_n,x_n\right) \leq \angle_{\overline{\omega}}\left(\overline{u_n}, \overline{x_n}\right)
	\end{equation}
	and
	$$d(y_n,\omega) \leq \|\overline{y_{n}} - \overline{\omega}\|.$$ 
	From the last inequality, \eqref{eq thm 3} and $\alpha_n \in (0,1)$,  we have
	\begin{eqnarray}\label{eq thm 4}
	d^2(y_n,\omega) &\leq& \|\overline{y_{n}} - \overline{\omega}\|^2 \notag \\
	&=& \left\| (1-\alpha_n) \overline{x_n} + \alpha_n \overline{u_n} - \overline{\omega}\right\|^2 \notag \\
	&=&\left\| (\overline{x_n} - \overline{\omega})  - \alpha_n\left(\overline{x_n} -  \overline{u_n}\right) \right\|^2 \notag \\
	&=& \|\overline{x_n} - \overline{\omega}\|^2 + \alpha_n^2\left\|\overline{x_n} - \overline{u_n}  \right\|^2 -2 \alpha_n \|\overline{x_n} - \overline{\omega}\| \left\|\overline{x_n} - \overline{u_n} \right \| \cos\angle_{\overline{\omega}}\left(\overline{u_n}, \overline{x_n}\right)  \notag \\
	&\leq& \|\overline{x_n} - \overline{\omega}\|^2 + \alpha_n\left\|\overline{x_n} - \overline{u_n}  \right\|^2 -2 \alpha_n \|\overline{x_n} - \overline{\omega}\| \left\|\overline{x_n} - \overline{u_n} \right \| \cos\angle_{\overline{\omega}}\left(\overline{u_n}, \overline{x_n}\right)  \notag \\
	&=& \|\overline{x_n} - \overline{\omega}\|^2 + \alpha_n\left\|\overline{x_n} - \overline{u_n}  \right\|^2 -2 \alpha_n \left \langle \overline{x_n} - \overline{\omega}, \overline{x_n} - \overline{u_n} \right \rangle_{\mathbb{R}^2} \notag \\
	&=& \|\overline{x_n} - \overline{\omega}\|^2 + (\alpha_n - 2\alpha_n)\left\|\overline{x_n} - \overline{u_n}  \right\|^2 +2 \alpha_n \left \langle  \overline{\omega} - \overline{u_n} , \overline{x_n} - \overline{u_n} \right \rangle_{\mathbb{R}^2} \notag \\
	&\leq& d^2(x_n,\omega)  - \alpha_nd^2\left(x_n,u_n\right) +  2 \alpha_n\left \langle \exp^{-1}_{u_n} \omega , \exp^{-1}_{u_n} x_n \right \rangle. 
	\end{eqnarray}
	On the other hand, since $u_n := J^A_{\lambda_n}(x_n)$ and $J^A_{\lambda_n}$ is firmly nonexpansive, it follows from Proposition \ref{prop firmly = 0}  that
	$$\left \langle \exp^{-1}_{u_n} \omega , \exp^{-1}_{u_n} x_n \right \rangle \leq 0.$$
	This together with \eqref{eq thm 4} yield that
	\begin{eqnarray}
	d^2(y_n,\omega)  &\leq& d^2(x_n,\omega)  -\alpha_nd^2\left(x_n,u_n\right)  \label{eq thm 5} \\
	&\leq & d^2(x_n,\omega).  \label{eq thm 6}
	\end{eqnarray}
	Recall from \eqref{eq alg 1} that $y_n = \exp_{x_n}\alpha_{n}\exp^{-1}_{x_n}u_n$, then we have $d(x_n,y_n) = \alpha_nd\left(x_n,u_n\right)$. From \eqref{eq thm 5}, we get
	\begin{eqnarray}\label{eq thm 7}
	d^2(y_n,\omega)  &\leq& d^2(x_n,\omega) -  \frac{1}{\alpha_n}d^2\left(x_n,y_n\right). 
	\end{eqnarray}

	For $n \in \mathbb{N}$, let $\gamma_n : [0,1]  \to M$ be a geodesic joining $\gamma_n(0) = x_n$ to $\gamma_n(1) = z_n$. Then, \eqref{eq alg 3} can be written as $x_{n+1} = \gamma_n(\beta_n)$. By using geodesic convexity of Riemannian distance, \eqref{eq thm 1} and \eqref{eq thm 6}, we get
\begin{eqnarray}\label{eq thm 8}
	d(x_{n+1},\omega) &=& d(\gamma_n(\beta_n), \omega) \notag \\
	&\leq&(1-\beta_n) d(\gamma_n(0),\omega) + \beta_nd(\gamma_n(1),\omega) \notag \\
	&=& (1-\beta_n) d(x_n,\omega) + \beta_nd(z_n,\omega) \notag \\
	&\leq& (1-\beta_n) d(x_n,\omega) + \beta_nd(y_n,\omega) \notag \\
	&\leq&(1-\beta_n) d(x_n,\omega) + \beta_nd(x_n,\omega) \notag \\
	&=& d(x_n,\omega).
\end{eqnarray}
Therefore, $\{x_n\}$ is Fej\'{e}r convergent with respect  to $\Omega$.\\

\framebox{{\bf Step II.} We show that $\lim_{n \to \infty}d(x_{n+1},x_n) = 0.$} \\
Fix  $n \in \mathbb{N}$. Let  $\bigtriangleup(x_n,z_n,\omega)$ be a geodesic triangle with vertices $x_n$, $z_n$ and $\omega$, and $\bigtriangleup(\overline{x_n},\overline{z_n},\overline{\omega})$ be the corresponding comparison triangle. Then, we have 
$$d(x_n,\omega) = \|\overline{x_n} - \overline{\omega}\|, \quad d(z_n,\omega) = \| \overline{z_n} - \overline{\omega} \| \ {\rm and} \ d(z_n,x_n) = \|\overline{z_n} - \overline{x_n} \|.$$
Recall that $x_{n+1} := \exp_{x_n}\beta_n\exp^{-1}_{x_n}z_n$, so it comparison point is $\overline{x_{n+1}} = (1-\beta_n) \overline{x_n} + \beta_n\overline{z_n}$.
 Using \eqref{eq less}, \eqref{eq thm 1}, and \eqref{eq thm 6}, we get
\begin{eqnarray}
	d^2(x_{n+1},\omega) &\leq& \|\overline{x_{n+1}} - \overline{\omega}\|^2 \notag \\
	&=& \|(1-\beta_n)\overline{x_n} + \beta_n\overline{z_n} - \overline{\omega}\|^2 \notag \\
	&=& \|(1-\beta_n)(\overline{x_n} - \overline{\omega}) + \beta_n(\overline{z_n} - \overline{\omega})\|^2 \notag \\
	&=& (1-\beta_n) \|\overline{x_n} - \overline{\omega}\|^2 + \beta_n\|\overline{z_n} - \overline{\omega}\|^2 -\beta_n(1 -\beta_n)\|\overline{x_n} - \overline{z_n}\|^2 \notag \\
	&=& (1-\beta_n)d^2(x_n,\omega) + \beta_n d^2(z_n, \omega) -\beta_n(1 -\beta_n)d^2(x_n , z_n) \notag \\
	&\leq& (1-\beta_n) d^2(x_n,\omega) + \beta_n d^2(y_n, \omega) -\beta_n(1 -\beta_n)d^2(x_n, z_n) \label{eq thm 9} \\
	&\leq&(1-\beta_n)d^2(x_n ,\omega) + \beta_nd^2(x_n , \omega) -\beta_n (1-\beta_n)d^2(x_n , z_n) \notag \\
	&=& d^2(x_n ,\omega) -\beta_n(1 -\beta_n)d^2(x_n , z_n).  \label{eq thm 10}
\end{eqnarray}

From \eqref{eq thm 10}, we also obtain
$$\beta_n(1 -\beta_n)d^2(x_n , z_n) \leq d^2(x_n,\omega) - d^2(x_{n+1},\omega),$$
and we further have
\begin{eqnarray*}
	d^2(x_n , z_n) &=& \frac{1}{\beta_n(1 -\beta_n)}(d^2(x_n,\omega) - d^2(x_{n+1},\omega)) \\
	&\leq&  \frac{1}{a(1 -b)}(d^2(x_n,\omega) - d^2(x_{n+1},\omega)).\notag 
\end{eqnarray*}
Since $\{x_n\}$ is a Fej\'{e}r convergent with respect to $\Omega$ which implies that $\lim_{n \to \infty}d(x_n,\omega)$ exists.  By letting $n \to \infty$, we have
\begin{equation}\label{eq thm 11}
	\lim_{n \to \infty}d(x_n,z_n) = 0.
\end{equation}

Recall that $x_{n+1} = \gamma_n(\beta_n)$ for all $n \in \mathbb{N}$, using the  geodesic convexity of Riemannian distance, we obtain
\begin{eqnarray*}
	d(x_{n+1},x_n) &=& d(\gamma_n(\beta_n),x_n) \notag \\
	&\leq& (1-\beta_n) d(\gamma_n(0),x_n) + \beta_nd(\gamma_n(1),x_n) \notag \\
	&=&  (1-\beta_n)d(x_n,x_n) + \beta_n d(z_n,x_n) \notag \\
	&=&  \beta_n d(x_n,z_n) \notag \\
	&\leq& b d(x_n,z_n).
	\end{eqnarray*}
Letting  $n \to \infty$ and using \eqref{eq thm 11}, we get
\begin{equation}\label{eq thm 12}
	\lim_{n \to \infty} d(x_{n+1},x_n) = 0.
\end{equation}

\framebox{{\bf Step III.} We show that $\lim_{n \to \infty}d(x_n,y_n) = 0.$}\\
Using \eqref{eq thm 7} and \eqref{eq thm 9}, we obtain
\begin{eqnarray*}
	d^2(x_{n+1},\omega) &\leq& (1-\beta_n) d^2(x_n,\omega) + \beta_n(d^2(x_n,\omega) -  \frac{1}{\alpha_n}d^2\left(x_n,y_n\right)) \notag \\
	&& -\beta_n(1 -\beta_n)d^2(x_n, z_n) \notag \\
	&=& (1-\beta_n) d^2(x_n,\omega) + \beta_nd^2(x_n,\omega)  - \frac{\beta_n}{\alpha_n}d^2\left(x_n,y_n\right) -\beta_n(1 -\beta_n)d^2(x_n, z_n) \notag \\
	&=& d^2(x_n,\omega) - \frac{\beta_n}{\alpha_n}d^2\left(x_n,y_n\right) -\beta_n(1 -\beta_n)d^2(x_n, z_n). 
\end{eqnarray*}
With some rearrangements we  obtain
\begin{equation*}
\frac{a}{b}d^2(x_n,y_n) \leq \frac{\beta_n}{\alpha_n}d^2\left(x_n,y_n\right)  \leq  d^2(x_n,\omega) -  d^2(x_{n+1},\omega) -\beta_n(1 -\beta_n)d^2(x_n, z_n). 
\end{equation*}
The Fej\'{e}r convergent of $\{x_n\}$ with respect to $\Omega$ and \eqref{eq thm 11} together imply that
\begin{equation}\label{eq thm 13}
	\lim_{n \to \infty}d(x_n,y_n) = 0.
\end{equation}

\framebox{{\bf Step IV.}  We show that the cluster points of $\{x_n\}$ belongs to $\Omega$.}\\
Since the sequence $\{x_n\}$ is Fej\'{e}r convergent, by condition (ii) of Lemma \ref{lem fejer}, $\{x_n\}$ is bounded. Hence, there exists a subsequence $\{x_{n_i}\}$ of $\{x_n\}$ which converges to a cluster point $x^*$ of $\{x_n\}$. From \eqref{eq thm 13},  we get $y_{n_i} \to x^*$ as $i \to \infty$. Also from \eqref{eq thm 11}, implies that  $z_{n_i} \to x^*$ as $i \to \infty$.

We firstly prove that $x^* \in EP(F)$. By $z_n = T^F_{r_n}(y_n)$, we get 
$$F(z_n,y) - \frac{1}{r_n}\langle \exp^{-1}_{z_n}y_n, \exp^{-1}_{z_n}y \rangle \geq 0, \quad \forall y \in K.$$
Since a bifunction $F$ is monotone, we obtain that
$$- \frac{1}{r_n}\langle \exp^{-1}_{z_n}y_n, \exp^{-1}_{z_n}y \rangle \geq F(y,z_n).$$
Replacing $n$ by $n_i$, we get
\begin{equation}\label{eq thm 14}
	- \frac{1}{r_{n_i}}\langle \exp^{-1}_{z_{n_i}}y_{n_i}, \exp^{-1}_{z_{n_i}}y \rangle \geq F(y,z_{n_i}).
\end{equation}
Recall that
\begin{eqnarray*}
\lim_{i \to \infty}\|\exp^{-1}_{z_{n_i}}y_{n_i}\| = \lim_{i \to \infty} d(y_{n_i},z_{n_i}) = 0,
\end{eqnarray*}
so we get $\exp^{-1}_{z_{n_i}}y_{n_i} \to {\bf 0}$ as $i \to \infty$. Using $\liminf_{i \to \infty}r_{n_i} > 0$ and $y \mapsto F(x,y)$ is lower semicontinuous, and letting $i \to \infty$ into \eqref{eq thm 14}, we get
\begin{equation*}
	0 \geq \liminf_{i \to \infty}F(y,z_{n_i}) \geq F(y,x^*), \quad \forall y \in K.
\end{equation*}
Let $\gamma :[0,1] \to M$ be the  geodesic joining $\gamma(0) = x^*$ to $\gamma(1) = y \in K$. Since $K$ is geodesic convex, then $\gamma(t) \in K$ and $F(\gamma(t),x^*) \leq 0$ for all $t \in [0,1]$.
Since $y \mapsto F(x,y)$ is geodesic convex, we have, for $t > 0,$ the following
\begin{eqnarray*}
	0 = F(\gamma(t),\gamma(t)) &\leq& tF(\gamma(t),y) + (1-t)F(\gamma(t),x^*) \\
	&\leq& tF(\gamma(t),y).
\end{eqnarray*}
Dividing by $t$ and since $x \mapsto F(x,y)$ is upper semicontinuous, we see that
\begin{eqnarray*}
	0 &\leq&  \limsup_{t \to 0^+} F(\gamma(t),y)  \\
	&\leq& F(x^*,y).
\end{eqnarray*}
Since $y \in K$ is chosen arability,  $x^* \in EP(F)$.

Next, we prove that $x^* \in A^{-1}({\bf 0})$.  Since $\{\alpha_{n}\} \subset (0,1)$ satisfying $0 < a \leq \alpha_n \leq b <1$,  $\frac{1}{\alpha_{n}}d(x_n,y_n) = d(x_n,u_n)$, and $\lim_{n \to \infty}d(x_n,y_n) =0$, we may see that
\begin{equation}\label{eq thm 15}
	\lim_{n \to \infty}d(x_n,u_n) = 0.
\end{equation}
Since $\hat{\lambda} \leq \lambda_n \leq \tilde{\lambda}$, we may assume without the loss of generality that $\lim_{i \to \infty}\lambda_{n_i} = \lambda$ for some subsequence $\{\lambda_{n_i}\}$ of $\{\lambda_n\}$ and some $\lambda \in [\hat{\lambda},\tilde{\lambda}]$.  Recall that $u_n = J^A_{\lambda_n}(x_n)$. Then by \eqref{eq thm 15} and Lemma \ref{lem convergent resolvent}, we obtain
we obtain $\lim_{i \to \infty}{u_{n_i}} = x^*$ and that $ x^* = J^A_{\lambda}(x^*)$.
From Remark \ref{rem resolvent}, we obtain $x^* \in A^{-1}({\bf 0})$. Therefore, we get $x^* \in \Omega$. By a (iii) of Lemma \ref{lem fejer},  the sequence $\{x_n\}$ generated by Algorithm \ref{alg 1} converges to a solution of the problem \eqref{eq problem}. The proof is therefore completed.
\end{proof}

Next, we have the following results of Theorem \ref{thm 1} as follows:

\begin{cor}\label{cor 1}
	Let a vector field $A \in \mathfrak{X}(K)$ be a maximal monotone such that $A^{-1}({\bf 0}) \neq \emptyset.$ Then the sequence generated by Algorithm \ref{alg 2} converges to a solution of problem \eqref{eq inclusion 2}. 
\end{cor}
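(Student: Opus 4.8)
The plan is to recognize Algorithm \ref{alg 2} as the specialization of Algorithm \ref{alg 1} obtained by taking $F \equiv 0$, and then to transfer the conclusion from Theorem \ref{thm 1}. First I would identify the common solution set. Since $F \equiv 0$ gives $F(x^*, y) = 0 \geq 0$ for every $y \in K$, the equilibrium set satisfies $EP(F) = K$; as $A^{-1}({\bf 0}) \subseteq K$ (recall that the range of the resolvent lies in $D(A)$), the set $\Omega = EP(F) \cap A^{-1}({\bf 0})$ collapses to $A^{-1}({\bf 0})$, which is nonempty by hypothesis.

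Next I would verify that the equilibrium subproblem \eqref{eq alg 2} trivializes. With $F \equiv 0$, the defining inequality for $z_n$ reads $-\frac{1}{r_n}\langle \exp^{-1}_{z_n}y_n, \exp^{-1}_{z_n}t\rangle \geq 0$ for all $t \in K$. Choosing $z_n = y_n$ makes $\exp^{-1}_{z_n}y_n = {\bf 0}$, so the inequality holds with equality; by the single-valuedness of the resolvent in Theorem \ref{thm resolvent bifunction}, $z_n = y_n$ is the unique solution. Substituting into \eqref{eq alg 3} gives $x_{n+1} = \exp_{x_n}\beta_n\exp^{-1}_{x_n}y_n$, and since $y_n$ lies on the minimal geodesic from $x_n$ to $u_n := J^A_{\lambda_n}(x_n)$ at parameter $\alpha_n$, the uniqueness of geodesics on $M$ lets me compose the two geodesic steps to obtain $x_{n+1} = \exp_{x_n}(\alpha_n\beta_n)\exp^{-1}_{x_n}u_n$. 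This is exactly the update \eqref{eq alg 4} with step parameter $\alpha_n\beta_n \in [a^2, b^2] \subset (0,1)$, so Algorithm \ref{alg 2} is indeed a special case of Algorithm \ref{alg 1}.

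With these identifications in place, the convergence statement follows by applying Theorem \ref{thm 1}: the generated sequence is Fej\'{e}r convergent with respect to $\Omega = A^{-1}({\bf 0})$, and every cluster point lies in $A^{-1}({\bf 0})$, whence $\{x_n\}$ converges to a singularity of $A$ by Lemma \ref{lem fejer}(iii). Concretely, Steps I--III of the proof of Theorem \ref{thm 1} carry over with $z_n = y_n$ (so \eqref{eq thm 1} holds with equality), yielding Fej\'{e}r monotonicity together with $d(x_n, u_n) \to 0$, and the cluster-point analysis of Step IV reduces to the verification $x^* \in A^{-1}({\bf 0})$ via Lemma \ref{lem convergent resolvent}.

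The main obstacle I anticipate is the admissibility of $F \equiv 0$ in the hypotheses of Theorem \ref{thm 1}. The zero bifunction satisfies (A1)--(A5) trivially but fails (A6) unless $K$ is compact, so one cannot literally quote the theorem. The remedy, which I would make explicit, is that (A6) enters the proof of Theorem \ref{thm 1} only through Lemma \ref{lem resolvent bifunction}, whose sole role is to guarantee solvability of the equilibrium subproblem; for $F \equiv 0$ this subproblem is solved explicitly by $z_n = y_n$, so the entire Fej\'{e}r-monotonicity and cluster-point argument goes through without invoking (A6). The membership $x^* \in EP(F)$ in Step IV becomes automatic because $EP(F) = K$ and $x^*$ is a limit of points of the closed set $K$, leaving only the inclusion-theoretic part of the argument to be retained verbatim.
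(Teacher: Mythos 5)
Your proposal is correct and follows the paper's own route: the paper obtains Corollary \ref{cor 1} simply by specializing Theorem \ref{thm 1} to $F \equiv 0$, under which $EP(F) = K$, $z_n = y_n$, and the two geodesic steps compose into the single update \eqref{eq alg 4} with parameter $\alpha_n\beta_n$ (conversely, any admissible $\alpha_n$ in Algorithm \ref{alg 2} is realized by taking $\alpha_n' = \beta_n' = \sqrt{\alpha_n}$ in Algorithm \ref{alg 1}). Your further observation that $F \equiv 0$ fails (A6) on a noncompact $K$, but that this is harmless because (A6) enters the proof of Theorem \ref{thm 1} only through Lemma \ref{lem resolvent bifunction} (solvability of the subproblem, which is explicit here), is a correct refinement of a point the paper glosses over.
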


\begin{cor}\label{cor 2}
	  Let $F : K \times K \to \mathbb{R}$ be a bifunction satisfies (A1)--(A6) and $T^F_{r_n}$ be the resolvent of $F$ for $\{r_n\} \subset (0,\infty) $ with $D(T^F_{r_n})$ is closed and geodesic convex set such that $EP(F) \neq \emptyset.$ Then the sequence generated by Algorithm \ref{alg 3} converges to a solution of problem \eqref{eq EP}. 
\end{cor}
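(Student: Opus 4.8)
The plan is to obtain Corollary \ref{cor 2} as the special case of Theorem \ref{thm 1} in which the vector field is trivial, $A \equiv {\bf 0}$, exactly as announced in the passage preceding Algorithm \ref{alg 3}. The first thing I would record is that for $A \equiv {\bf 0}$ the defining relation of the resolvent, $J^{{\bf 0}}_{\lambda_n}(x) = \{z \in K : x \in \exp_z \lambda_n {\bf 0}(z)\}$, collapses to $\{z : x = z\}$, so $J^{{\bf 0}}_{\lambda_n}$ is the identity map on $K$. Substituting this into \eqref{eq alg 1} gives $y_n = \exp_{x_n}\alpha_n \exp^{-1}_{x_n}x_n = x_n$ for every $n$, so that Algorithm \ref{alg 1} reduces term by term to Algorithm \ref{alg 3}: step \eqref{eq alg 2} becomes the equilibrium step \eqref{eq alg 5} with $y_n$ replaced by $x_n$, and step \eqref{eq alg 3} is carried over unchanged.

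Next I would identify the solution set. Since ${\bf 0}(z) = {\bf 0} \in A(z)$ for every $z \in K$, we have $A^{-1}({\bf 0}) = K$, and therefore $\Omega = EP(F) \cap A^{-1}({\bf 0}) = EP(F) \cap K = EP(F)$, because $EP(F) \subseteq K$ by definition. Consequently the standing hypothesis $\Omega \neq \emptyset$ of Theorem \ref{thm 1} becomes precisely $EP(F) \neq \emptyset$, and the target problem \eqref{eq problem} becomes the equilibrium problem \eqref{eq EP}. The hypotheses imposed on $F$, namely (A1)--(A6) together with the closed geodesic convexity of $D(T^F_{r_n})$, are transcribed verbatim, so Theorem \ref{thm 1} applies and yields that $\{x_n\}$ converges to a point of $\Omega = EP(F)$, that is, to a solution of \eqref{eq EP}.

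The only delicate point is the status of the trivial field as a maximal monotone vector field appearing in the hypotheses of Theorem \ref{thm 1}; rather than arguing maximality of ${\bf 0}$ abstractly, I would prefer to observe that the proof of Theorem \ref{thm 1} uses only that $J^A_{\lambda_n}$ is the single-valued, firmly nonexpansive identity, which is immediate here. Concretely, one may re-run that proof with $u_n = J^{{\bf 0}}_{\lambda_n}(x_n) = x_n$: Step I's estimate \eqref{eq thm 5} holds trivially with $d(x_n, u_n) = 0$ (indeed $d(y_n, \omega) \le d(x_n, \omega)$ is immediate from $y_n = x_n$), Steps II--III deliver $d(x_{n+1}, x_n) \to 0$ and the same asymptotic regularity, and Step IV reduces to the single verification that each cluster point $x^*$ of $\{x_n\}$ lies in $EP(F)$.

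I expect the entire substantive content to sit in that last verification, namely the Minty-type argument of Step IV that passes from $F(y, z_{n_i}) \le -\tfrac{1}{r_{n_i}}\langle \exp^{-1}_{z_{n_i}}x_{n_i}, \exp^{-1}_{z_{n_i}}y\rangle$, through monotonicity and lower semicontinuity, to $F(y, x^*) \le 0$, and then through the geodesic convexity and upper semicontinuity of $F$ to $F(x^*, y) \ge 0$ for all $y \in K$. Since this step never invokes the inclusion machinery, it transfers from Theorem \ref{thm 1} unchanged and presents no new obstacle; the only work is to confirm that the reduction $y_n = x_n$ is applied consistently throughout, after which Lemma \ref{lem fejer}(iii) completes the argument.
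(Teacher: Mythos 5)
Your proposal is correct and follows essentially the same route as the paper, which states Corollary \ref{cor 2} as a direct specialization of Theorem \ref{thm 1} with $A \equiv {\bf 0}$, under which $J^{{\bf 0}}_{\lambda_n}$ is the identity, $y_n = x_n$, and $\Omega = EP(F)$. Your additional observation --- that one need not verify maximal monotonicity of the trivial field abstractly because the proof of Theorem \ref{thm 1} only uses that $J^A_{\lambda_n}$ is single-valued and firmly nonexpansive, which the identity trivially is --- is a welcome extra precaution that the paper itself leaves implicit.
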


\section{Applications}\label{sec5}
In this section, we derive an algorithm for finding the minimizers of minimization problems, and also give an algorithm for finding the saddle points of minimax problems.
\subsection{Minimization problems. \\}
Let  $g : M \to \mathbb{R}$ be a proper,  lower semicontinuous geodesic convex function. Consider the optimization problem:
\begin{equation}\label{eq mini pb}
\underset{x \in M}{\min} \  g(x).
\end{equation}

We denote $S_g$ the solution set of  \eqref{eq mini pb}, that is,
\begin{equation}\label{eq mini solution}
S_g = \{x \in M : g(x) \leq g(y), \quad \forall y \in M\}. 
\end{equation}

\begin{dfn}\label{sub diff} Let $g : M \to \mathbb{R}$ be a geodesic convex and $x \in M$. A vector $s \in T_xM$ is called a {\it subgradient} of $g$ at $x$ if and only if
	\begin{equation}\label{eq sub diff}
	g(y) \geq g(x) + \langle s , \exp^{-1}_xy \rangle, \quad \forall y \in M.
	\end{equation}
\end{dfn}
\noindent The set of all subgradients of $g$, denoted by $\partial g(x)$ is called the {\it subdifferential} of $g$ at $x$, which is closed  geodesic convex (possibly empty) set.

\begin{lem}\label{lem sub diff} {\rm \cite{MR2506692}}
	Let $g:M \to \mathbb{R}$ be a proper, lower semicontinuous geodesic convex function. Then, the subdifferential $\partial g$ of $g$ is a maxiaml monotone vector field.
\end{lem}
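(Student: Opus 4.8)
The plan is to check the two clauses of the definition of maximal monotonicity separately. Monotonicity is immediate from the subgradient inequality, and essentially all the difficulty sits in maximality, for which I would run a Minty-type argument adapted to the Hadamard geometry.

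First I would establish that $\partial g$ is monotone. Take $x,y \in D(\partial g)$, $u \in \partial g(x)$ and $v \in \partial g(y)$. Applying \eqref{eq sub diff} for $u$ evaluated at the point $y$ and for $v$ evaluated at the point $x$ gives
$$g(y) \ge g(x) + \langle u, \exp^{-1}_{x} y\rangle, \qquad g(x) \ge g(y) + \langle v, \exp^{-1}_{y} x\rangle.$$
Adding these and cancelling $g(x)+g(y)$ yields $\langle u, \exp^{-1}_{x} y\rangle + \langle v, \exp^{-1}_{y} x\rangle \le 0$, which is exactly $\langle u, \exp^{-1}_{x} y\rangle \le \langle v, -\exp^{-1}_{y} x\rangle$, i.e.\ monotonicity in the sense of Definition \ref{def monotone setvalued}(i). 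In particular, by Theorem \ref{thm resolvent is fimly}(i) the resolvent $J^{\partial g}_{\lambda}$ is single-valued for every $\lambda>0$.

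For maximality, suppose $x \in M$ and $u \in T_xM$ satisfy
$$\langle u, \exp^{-1}_{x} y\rangle \le \langle v, -\exp^{-1}_{y} x\rangle \quad \text{for all } y \in D(\partial g) \text{ and } v \in \partial g(y),\qquad (\star)$$
and I must deduce $u \in \partial g(x)$. Fix $\lambda>0$ and set $q := \exp_{x}\lambda u$, so $\exp^{-1}_{x} q = \lambda u$. The key auxiliary point is the proximal point $z := J^{\partial g}_{\lambda}(q)$, the unique minimizer of $w \mapsto g(w) + \tfrac{1}{2\lambda}d^2(q,w)$; its optimality condition, together with the fact that the gradient field of $w\mapsto \tfrac12 d^2(q,w)$ equals $-\exp^{-1}_{w} q$, yields $v := \tfrac1\lambda \exp^{-1}_{z} q \in \partial g(z)$, so $z \in D(\partial g)$ and $\exp^{-1}_{z} q = \lambda v$. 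Feeding the pair $(z,v)$ into $(\star)$ gives $\langle u, \exp^{-1}_{x} z\rangle + \langle v, \exp^{-1}_{z} x\rangle \le 0$. Now consider the geodesic triangle $\bigtriangleup(x,z,q)$ and a comparison triangle $\bigtriangleup(\overline{x},\overline{z},\overline{q}) \subseteq \mathbb{R}^2$. Applying the CAT$(0)$ angle comparison \eqref{eq angle} at the vertex $x$ (using $\exp^{-1}_{x} q = \lambda u$) and at the vertex $z$ (using $\exp^{-1}_{z} q = \lambda v$) gives
$$\langle \overline{z} - \overline{x}, \overline{q} - \overline{x}\rangle_{\mathbb{R}^2} \le \lambda\langle u, \exp^{-1}_{x} z\rangle, \qquad \langle \overline{x} - \overline{z}, \overline{q} - \overline{z}\rangle_{\mathbb{R}^2} \le \lambda\langle v, \exp^{-1}_{z} x\rangle.$$
Adding these, the left-hand side telescopes to $\|\overline{z}-\overline{x}\|^2 = d^2(x,z)$, while the right-hand side equals $\lambda(\langle u, \exp^{-1}_{x} z\rangle + \langle v, \exp^{-1}_{z} x\rangle) \le 0$ by the consequence of $(\star)$ above. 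Hence $d^2(x,z) \le 0$, forcing $z = x$; then $\lambda v = \exp^{-1}_{z} q = \exp^{-1}_{x} q = \lambda u$ gives $v = u$, so $u = v \in \partial g(z) = \partial g(x)$, which is Definition \ref{def monotone setvalone}(ii).

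I expect the genuine obstacle to be not the comparison-geometry computation (which is short once the auxiliary point is available) but the construction and analysis of the proximal point $z$. This requires existence and uniqueness of the minimizer of $g + \tfrac{1}{2\lambda}d^2(q,\cdot)$, resting on lower semicontinuity, geodesic convexity and the coercivity/strong convexity supplied by the squared distance on a Hadamard manifold, together with the subdifferential optimality/sum rule and the formula for the gradient of $\tfrac12 d^2(q,\cdot)$ that identifies $\tfrac1\lambda\exp^{-1}_{z} q$ as an element of $\partial g(z)$. These are precisely the resolvent/proximal facts from \cite{MR2506692}, so in the write-up I would invoke them directly; alternatively, since $g$ is finite-valued one has $D(\partial g) = M$, and one could instead conclude through Theorem \ref{thm resolvent is fimly}(ii) once the resolvent $J^{\partial g}_{\lambda}$ is shown to be globally defined.
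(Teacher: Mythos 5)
The paper does not actually prove this lemma --- it is quoted verbatim from Li, L\'opez and Mart\'{\i}n-M\'arquez \cite{MR2506692} --- so there is no in-paper argument to compare against; your proposal has to be judged against the cited source, and it is in fact essentially the argument given there. Your monotonicity step is correct, and the Minty-type maximality argument is sound: with $q:=\exp_x\lambda u$ and $z$ the proximal point, the pair of comparison inequalities at the vertices $x$ and $z$ of $\bigtriangleup(x,z,q)$ does telescope to $d^2(x,z)\leq \lambda\bigl(\langle u,\exp^{-1}_x z\rangle+\langle v,\exp^{-1}_z x\rangle\bigr)\leq 0$, forcing $z=x$ and $v=u$. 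You correctly identify where the real content lies, namely in the facts that $w\mapsto g(w)+\tfrac{1}{2\lambda}d^2(q,w)$ has a unique minimizer $z$ (coercivity follows from one subgradient inequality plus the quadratic growth of $d^2(q,\cdot)$, and minimizers exist since bounded closed sets in a finite-dimensional Hadamard manifold are compact), that the first-order condition gives $\tfrac{1}{\lambda}\exp^{-1}_z q\in\partial g(z)$ via the gradient formula for $\tfrac12 d^2(q,\cdot)$, and that $D(\partial g)=M$ because a finite-valued geodesic convex function is locally Lipschitz; these are exactly the ingredients \cite{MR2506692} supplies. Your alternative closing route through Theorem \ref{thm resolvent is fimly}(ii) (monotonicity gives a single-valued firmly nonexpansive resolvent, the proximal construction gives $D(J^{\partial g}_\lambda)=M$, and the equivalence then yields maximality) is equally valid and arguably cleaner. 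The only blemishes are cosmetic: the reference \verb|\ref{def monotone setvalone}| at the end is a typo for \verb|\ref{def monotone setvalued}|, and you should state explicitly that the comparison inequality \eqref{eq angle} may be applied at any vertex of the triangle, not just $p_1$, since that is how you use it at $z$.
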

It is easy to see that
$$x \in S_g \Longleftrightarrow {\bf 0} \in \partial g(x).$$

Recall that $\partial g$ is maximal monotone if $g: M \to \mathbb{R}$ is lower semi-continuous and convex. Applying Algorithm \ref{alg 1} to the multivalued vector filed $\partial g$, we obtain the following results for the convex minimization problem \eqref{eq mini pb}.

\begin{thm} Let $g : M \to \mathbb{R}$ be a proper, lower semicontinuous  geodesic convex function and  $F : K \times K \to \mathbb{R}$ be a bifunction satisfying (A1)--(A6) and $T^F_{r_n}$ be the resolvent of $F$ for $\{r_n\} \subset (0,\infty) $ with $D(T^F_{r_n})$ is closed and geodesic convex set such that $EP(F) \cap S_g \neq \emptyset.$ Let $\{x_n\}$ be a sequence in $D(g)$ generated as 
	$$	y_n : = \exp_{x_n}\alpha_n\exp^{-1}_{x_n}J^{\partial g}_{\lambda_n}(x_n),$$
	$$	z_n \in K \  {\rm such \ that} \	F(z_n,t) - \frac{1}{r_n}\langle \exp^{-1}_{z_n}y_n, \exp^{-1}_{z_n}t \rangle \geq 0, \quad \forall t \in K,$$
	$$	x_{n+1} := \exp_{x_n}\beta_n\exp^{-1}_{x_n}z_n, \quad \forall n \in \mathbb{N}, $$
where $\{\alpha_n\}, \{\beta_n\}, \{\lambda_n\}$ and $\{r_n\}$ are real positive sequences such that
\begin{itemize}
	\item[{\rm (i)}] $0 < a \leq \alpha_n, \beta_n \leq b <1, \quad \forall n \in \mathbb{N}$,
	\item [{\rm(ii)}] $0 < \hat{\lambda}  \leq \lambda_n \leq \tilde{\lambda} < \infty, \quad \forall n \in \mathbb{N}$,
	\item [{\rm(iii)}] $\liminf_{n \to \infty} r_n > 0.$
\end{itemize}
	Then, the sequence $\{x_n\}$ converges to a solution of the problem $EP(F) \cap S_g$.
\end{thm}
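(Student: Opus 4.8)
The plan is to recognize this theorem as a direct specialization of Theorem \ref{thm 1}, obtained by taking the maximal monotone vector field $A$ to be the subdifferential $\partial g$. First I would observe that the three recursions defining $\{y_n\}$, $\{z_n\}$ and $\{x_{n+1}\}$ in the statement are verbatim the recursions \eqref{eq alg 1}--\eqref{eq alg 3} of Algorithm \ref{alg 1}, with the resolvent $J^A_{\lambda_n}$ replaced by $J^{\partial g}_{\lambda_n}$; hence the sequence produced here is precisely the one generated by Algorithm \ref{alg 1} applied to $A = \partial g$, and the step-size conditions (i)--(iii) coincide as well. Thus the entire task reduces to checking that the hypotheses of Theorem \ref{thm 1} are in force for this particular choice of $A$.

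Two structural facts carry the argument. By Lemma \ref{lem sub diff}, since $g$ is proper, lower semicontinuous and geodesic convex, the vector field $\partial g$ is maximal monotone, so it satisfies the standing requirement on $A$ in Theorem \ref{thm 1}. Next I would use the equivalence
$$x \in S_g \Longleftrightarrow {\bf 0} \in \partial g(x)$$
recorded just above the theorem to identify the singularity set of $\partial g$ with the minimizer set, namely $(\partial g)^{-1}({\bf 0}) = S_g$. This identification is immediate from the subgradient inequality \eqref{eq sub diff}: taking $s = {\bf 0}$ there gives $g(y) \geq g(x)$ for all $y \in M$, i.e. $x \in S_g$, and the converse is the defining property of a minimizer. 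Consequently the solution set of problem \eqref{eq problem} for $A = \partial g$ is
$$\Omega = EP(F) \cap (\partial g)^{-1}({\bf 0}) = EP(F) \cap S_g,$$
which is nonempty by hypothesis.

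With these identifications in hand, every assumption of Theorem \ref{thm 1} holds: $A = \partial g$ is maximal monotone, $F$ satisfies (A1)--(A6), the resolvent $T^F_{r_n}$ has closed and geodesic convex domain, and $\Omega = EP(F) \cap S_g \neq \emptyset$. Invoking Theorem \ref{thm 1} directly then yields that $\{x_n\}$ is Fej\'{e}r convergent with respect to $\Omega$ and converges to a point of $\Omega = EP(F) \cap S_g$, which is exactly the desired conclusion. I do not anticipate a genuine obstacle here, since the proof is purely an application of the master theorem; the only point demanding care is the verification that $(\partial g)^{-1}({\bf 0})$ coincides with $S_g$, so that a singularity of the vector field is the same as a minimizer of $g$, and this has been supplied above.
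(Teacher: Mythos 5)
Your proposal is correct and coincides with the paper's own (implicit) argument: the paper simply states that the result follows by applying Algorithm \ref{alg 1} to the vector field $\partial g$, relying on Lemma \ref{lem sub diff} for maximal monotonicity and on the equivalence $x \in S_g \Longleftrightarrow {\bf 0} \in \partial g(x)$ to identify $(\partial g)^{-1}({\bf 0})$ with $S_g$, exactly as you do. Your write-up is in fact more explicit than the paper's, which gives no separate proof at all.
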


\begin{cor}
	Let $g : M \to \mathbb{R}$ be a proper, lower semicontinuous geodesic convex function and $ S_g \neq \emptyset.$ Let $\{x_n\}$ be a sequence in $D(g)$ generated as
	\begin{eqnarray*}
			x_{n+1} &:=& \exp_{x_n}\alpha_n\exp^{-1}_{x_n}J^{\partial g}_{\lambda_n}(x_n), \quad \forall n \in \mathbb{N}, 
	\end{eqnarray*} 
where $\{\alpha_n\}$ and $\{\lambda_n\}$ are real positive sequences such that
\begin{itemize}
	\item[{\rm (i)}] $0 < a \leq \alpha_n \leq b <1, \quad \forall n \in \mathbb{N}$,
	\item [{\rm(ii)}] $0 < \hat{\lambda}  \leq \lambda_n \leq \tilde{\lambda} < \infty, \quad \forall n \in \mathbb{N}$.
\end{itemize}
	Then, the sequence $\{x_n\}$ converges to a solution of the problem \eqref{eq mini pb}.
\end{cor}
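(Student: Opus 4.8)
The plan is to recognize this Corollary as the pure inclusion specialization of the preceding machinery, obtained by setting the vector field $A := \partial g$ and switching the equilibrium bifunction off ($F \equiv 0$). Concretely, I would reduce the statement to Corollary \ref{cor 1} applied to Algorithm \ref{alg 2}, rather than re-running the Fej\'{e}r analysis from scratch.

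First I would verify that $\partial g$ satisfies the hypothesis required of the vector field in Corollary \ref{cor 1}, namely maximal monotonicity. Since $g$ is proper, lower semicontinuous and geodesic convex, this is exactly the content of Lemma \ref{lem sub diff}, so $A = \partial g \in \mathfrak{X}(M)$ is maximal monotone; consequently, by Theorem \ref{thm resolvent is fimly}, its resolvent $J^{\partial g}_{\lambda_n}$ is single-valued and firmly nonexpansive, which is what makes the iteration well posed.

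Next I would identify the singularity set with the minimizer set. Reading the subgradient inequality \eqref{eq sub diff} with $y$ ranging over all of $M$, one sees that ${\bf 0} \in \partial g(x)$ holds if and only if $g(y) \geq g(x)$ for every $y \in M$, that is, $x \in S_g \Longleftrightarrow {\bf 0} \in \partial g(x)$; hence $S_g = (\partial g)^{-1}({\bf 0})$. In particular the hypothesis $S_g \neq \emptyset$ is precisely the nonemptiness assumption $(\partial g)^{-1}({\bf 0}) \neq \emptyset$ demanded by Corollary \ref{cor 1}, and a singularity of $\partial g$ is exactly a minimizer of $g$, i.e. a solution of \eqref{eq mini pb}.

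Finally I would observe that the displayed recursion $x_{n+1} = \exp_{x_n}\alpha_n\exp^{-1}_{x_n}J^{\partial g}_{\lambda_n}(x_n)$ is verbatim Algorithm \ref{alg 2} (equation \eqref{eq alg 4}) with $A = \partial g$, and that the stepsize requirements (i)--(ii) coincide with those imposed there. Corollary \ref{cor 1} then yields convergence of $\{x_n\}$ to a point of $(\partial g)^{-1}({\bf 0}) = S_g$, that is, to a solution of \eqref{eq mini pb}. I expect no genuine analytic obstacle here, since all the substantive work — the Fej\'{e}r monotonicity, the cluster-point analysis, and the closedness of the resolvent graph via Lemma \ref{lem convergent resolvent} — is already discharged in the proof of Theorem \ref{thm 1}; the only point that must be checked with care is precisely the maximal monotonicity of $\partial g$, which is why Lemma \ref{lem sub diff} is the crucial ingredient that licenses the whole reduction.
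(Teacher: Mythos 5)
Your reduction is exactly the paper's route: the authors obtain this corollary by applying the inclusion-only scheme (Corollary \ref{cor 1} / Algorithm \ref{alg 2}) to $A=\partial g$, using Lemma \ref{lem sub diff} for maximal monotonicity of $\partial g$ and the identification $S_g=(\partial g)^{-1}({\bf 0})$. Your proposal is correct and adds nothing different from, nor omits anything needed by, the paper's argument.
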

\subsection{Saddle points in a minimax problem. \\}
In this subsection, we first recall the formulation of saddle point problems in the frame work of Hadamard manifolds. Then we derive on algorithm to find the saddle point.
Our results improves the results given by Li et al. \cite{MR2506692}.

Let $M_1$ and $M_2$ be the Hadamard manifolds, and $K_1$ and $K_2$ the geodesic convex subset of $M_1$ and $M_2$, respectively.  A function $H: K_1 \times K_2 \to \mathbb{R}$ is called a {\it saddle function} if 
\begin{itemize}
	\item[{\rm (a)}] $H(x,\cdot)$ is geodesic convex on $K_2$ for each $x \in K_1$ and
	\item[{\rm (b)}] $H(\cdot,y)$ is geodesic concave, i.e., $-H(\cdot,y)$ is geodesic convex on $K_1$ for each $y \in K_2.$
\end{itemize}
A point $ \tilde{z} = (\tilde{x},\tilde{y})$ is said to be a {\it saddle point} of $H$ if
$$H(x,\tilde{y}) \leq H(\tilde{x},\tilde{y}) \leq H(\tilde{x},y), \quad  \forall z = (x,y) \in K_1 \times K_2.$$
We denote $SPP$ to the set of saddle points of $H$. Let $V_H : K_1 \times K_2 \to 2^{TM_1} \times 2^{TM_2}$ be a multivalued vector field associated with saddle function $H$, defined by
\begin{equation}\label{eq saddle}
	V_H(x,y) = \partial(-H(\cdot,y))(x) \times \partial(H(x,\cdot))(y), \quad \forall (x,y) \in K_1 \times K_2.
\end{equation}
The product space $M = M_1 \times M_2$ is a Hadamard manifold and the tangent space of $M$ at $z = (x,y)$ is $T_zM = T_xM_1 \times T_yM_2$. For further details, see \cite[Page 239]{MR1390760}. The corresponding metric given by 
$$\langle w,w'\rangle = \langle u, u'\rangle + \langle v,v'\rangle, \quad \forall w = (u,v), w' = (u',v') \in T_zM.$$
A geodesic in the product manifold $M$ is the product of two geodesic in $M_1$ and $M_2.$  Then, for any two point $z = (x,y)$ and $z' = (x',y')$ in $M$, we have
$$\exp_z^{-1}z' = \exp_{(x,y)}^{-1}(x',y') = (\exp_x^{-1}x', \exp_y^{-1}y').$$
A vector field $V : M_1 \times M_2 \to 2^{TM_1} \times 2^{TM_2}$ is said to be monotone if and only if for any $z = (x,y), z'=(x',y'), w = (u,v) \in V(z)$ and $w' = (u',v') \in V(z'),$ we have  
$$\langle u,\exp_x^{-1}x' \rangle + \langle v,\exp_y^{-1}y' \rangle \leq \langle u',-\exp_{x'}^{-1}x \rangle + \langle v',-\exp_{y'}^{-1}y \rangle.$$
\begin{thm}{\rm \cite{MR2506692}}
	Let $H$ be a saddle function on $K = K_1 \times K_2$ and $V_H$ the multivalued vector field defined by \eqref{eq saddle}. Then $V_H$ is maximal monotone.
\end{thm}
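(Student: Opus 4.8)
The plan is to verify the two defining properties of a maximal monotone vector field separately: first that $V_H$ is monotone, and then that it is maximal. For the monotone inequality I would take $z=(x,y)$ and $z'=(x',y')$ in $D(V_H)$ together with $w=(u,v)\in V_H(z)$ and $w'=(u',v')\in V_H(z')$, so that by \eqref{eq saddle}
$$u\in\partial(-H(\cdot,y))(x),\quad v\in\partial(H(x,\cdot))(y),\quad u'\in\partial(-H(\cdot,y'))(x'),\quad v'\in\partial(H(x',\cdot))(y').$$
Writing the subgradient inequality \eqref{eq sub diff} for each of these four vectors and evaluating it at the ``opposite'' base point (for instance, testing the subgradient $u$ of $-H(\cdot,y)$ at the point $x'$) yields the four scalar estimates
$$\langle u,\exp_x^{-1}x'\rangle\le H(x,y)-H(x',y),\qquad \langle v,\exp_y^{-1}y'\rangle\le H(x,y')-H(x,y),$$
$$\langle u',-\exp_{x'}^{-1}x\rangle\ge H(x,y')-H(x',y'),\qquad \langle v',-\exp_{y'}^{-1}y\rangle\ge H(x',y')-H(x',y).$$
Adding the first pair and, separately, the second pair, I find that both the left-hand side and the right-hand side of the monotonicity inequality are controlled by the \emph{same} quantity $H(x,y')-H(x',y)$: the left side is bounded above by it and the right side is bounded below by it. Chaining the two bounds gives exactly the required inequality, so $V_H$ is monotone. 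This step is pure bookkeeping and I expect no difficulty.

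For maximality I would exploit the product structure by slicing. Suppose $(u,v)\in T_{(x,y)}M$ passes the maximality test, i.e.\ satisfies the monotone inequality against every $(x',y')\in D(V_H)$ and every $(u',v')\in V_H(x',y')$. Restricting the test to points of the form $(x',y)$, i.e.\ freezing the second coordinate, annihilates the $M_2$-component since $\exp_y^{-1}y={\bf 0}$, and leaves $\langle u,\exp_x^{-1}x'\rangle\le\langle u',-\exp_{x'}^{-1}x\rangle$ for the admissible $u'\in\partial(-H(\cdot,y))(x')$. Because $-H(\cdot,y)$ is proper lower semicontinuous geodesic convex on $K_1$, its subdifferential is maximal monotone by Lemma \ref{lem sub diff}, and this inequality forces $u\in\partial(-H(\cdot,y))(x)$. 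Symmetrically, freezing the first coordinate at $x'=x$ isolates the $M_2$-part and yields $v\in\partial(H(x,\cdot))(y)$ from the maximal monotonicity of $\partial(H(x,\cdot))$. Together these give $(u,v)\in V_H(x,y)$, which is precisely maximality.

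The delicate point, which I expect to be the main obstacle, is that the slicing argument only supplies test vectors $u'$ coming from points $x'$ for which the \emph{full} fiber $V_H(x',y)$ is nonempty, i.e.\ for which both $\partial(-H(\cdot,y))(x')$ and $\partial(H(x',\cdot))(y)$ are nonempty; a priori this is a smaller family than all of $D(\partial(-H(\cdot,y)))$, which is what testing maximality of $\partial(-H(\cdot,y))$ strictly requires. Closing this gap means showing that the effective domain of $V_H$ is rich enough in each slice, equivalently that the two partial subdifferential domains of the saddle function interlock. I would resolve it either by using finiteness of $H$ together with (relative) interiority so that the partial subdifferentials are nonempty throughout the relevant slices, or by replacing the slicing step with a Minty-type surjectivity criterion: by Theorem \ref{thm resolvent is fimly}(ii), once $D(V_H)=K$ maximality follows from showing that the resolvent $J^{V_H}_\lambda$ has full domain, which amounts to solving a regularized saddle-point problem for $H$ and is where the genuine analytic work lies.
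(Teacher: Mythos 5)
The paper does not actually prove this theorem; it is imported verbatim from \cite{MR2506692}, so there is no in-text argument to compare against. Judged on its own terms, your monotonicity step is correct and complete: the four subgradient inequalities obtained from \eqref{eq sub diff} are the right ones, and chaining the two partial sums through the common quantity $H(x,y')-H(x',y)$ is exactly the classical Rockafellar computation transplanted to the manifold setting; this is also how the cited source argues. Your maximality step is likewise the right strategy (slice the test inequality along each factor, use that freezing one coordinate annihilates the corresponding inner products, and invoke maximal monotonicity of the partial subdifferentials), and you have correctly located the one genuine issue: the slice only supplies test vectors $u'$ at those $x'$ for which \emph{both} partial subdifferentials are nonempty, whereas maximality of $\partial(-H(\cdot,y))$ must be tested against its full graph.

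What you should note is that this gap is absent in the setting of the cited reference: there the saddle function is finite on all of $M_1\times M_2$, and a finite geodesic convex function on a Hadamard manifold has nonempty subdifferential at every point, so $D(V_H)=M_1\times M_2$ and the slicing is unobstructed --- taking $z'=(x',y)$ for arbitrary $x'\in M_1$ kills the $M_2$ terms, and maximality of $\partial(-H(\cdot,y))$ (Lemma \ref{lem sub diff}) yields $u\in\partial(-H(\cdot,y))(x)$, symmetrically for $v$. In the form stated here, with $H$ defined only on a geodesic convex subset $K_1\times K_2$, the function $-H(\cdot,y)$ is not an $\mathbb{R}$-valued function on all of $M_1$, Lemma \ref{lem sub diff} does not apply as stated, and the interlocking-domains problem you describe is real; your two proposed repairs (relative interiority of the slices, or a Minty-type surjectivity argument for $J^{V_H}_\lambda$) are the standard ways out, but neither is carried out, so as written the maximality half remains a sketch rather than a proof. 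The honest course is either to assume $K_i=M_i$ with $H$ finite, matching \cite{MR2506692}, or to complete one of your two closures before relying on the statement in this generality.
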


One can check that a point $\tilde{z} = (\tilde{x},\tilde{y}) \in K$ is a  saddle point of $H$ if and only if it is a singularity of $V_H$.    Applying Algorithm \eqref{alg 1} to multivalued vector field $V_H$ associated with the saddle function $H$, we get the following result.

\begin{thm}
	Let $H : K = K_1 \times K_2 \to \mathbb{R}$ be a saddle function and $V_H: K_1 \times K_2 \to 2^{TM_1} \times 2^{TM_2}$ be the associated maximal monotone vector field. Let $F : K \times K \to \mathbb{R}$ be a bifunction satisfying (A1)--(A6) and $T^F_{r_n}$ be the resolvent of $F$ for $\{r_n\} \subset (0,\infty)$ with $D(T^F_{r_n})$ is closed and geodesic convex set such that $EP(F)\cap SSP \neq \emptyset.$ Choose initial point $x_0 \in K \times K$ and define $\{x_n\}, \{y_n\}$ and $\{z_n\}$ as follows: $$	y_n : = \exp_{x_n}\alpha_n\exp^{-1}_{x_n}J^{\partial V_H}_{\lambda_n}(x_n),$$
	$$	z_n \in K \  {\rm such \ that} \	F(z_n,t) - \frac{1}{r_n}\langle \exp^{-1}_{z_n}y_n, \exp^{-1}_{z_n}t \rangle \geq 0, \quad \forall t \in K,$$
	$$	x_{n+1} := \exp_{x_n}\beta_n\exp^{-1}_{x_n}z_n, \quad \forall n \in \mathbb{N}, $$
	where $\{\alpha_n\}, \{\beta_n\}, \{\lambda_n\}$ and $\{r_n\}$ are real positive sequences such that
	\begin{itemize}
		\item[{\rm (i)}] $0 < a \leq \alpha_n, \beta_n \leq b <1, \quad \forall n \in \mathbb{N}$,
		\item [{\rm(ii)}] $0 < \hat{\lambda}  \leq \lambda_n \leq \tilde{\lambda} < \infty, \quad \forall n \in \mathbb{N}$,
		\item [{\rm(iii)}] $\liminf_{n \to \infty} r_n > 0.$
	\end{itemize}
	Then, the sequence $\{x_n\}$ converges to a solution of the problem $EP(F) \cap SPP$.
\end{thm}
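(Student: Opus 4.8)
The plan is to recognize this theorem as the specialization of Theorem \ref{thm 1} obtained by taking the vector field $A = V_H$ on the product manifold $M = M_1 \times M_2$. First I would record the product structure set out in the preamble: $M$ is itself a finite-dimensional Hadamard manifold, its tangent space at $z=(x,y)$ splits as $T_zM = T_xM_1 \times T_yM_2$, the Riemannian metric is the direct sum, every geodesic in $M$ is a product of geodesics in $M_1$ and $M_2$, and $\exp_z^{-1}z' = (\exp_x^{-1}x', \exp_y^{-1}y')$. Consequently the geodesic steps governed by $\alpha_n$ and $\beta_n$, the resolvent $J^{V_H}_{\lambda_n}$ of the maximal monotone field $V_H$, and the equilibrium subproblem defining $z_n$ are all well-defined on $M$, and the displayed iteration is precisely an instance of Algorithm \ref{alg 1} with $A = V_H$.

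The key step is to identify the singularity set of $V_H$ with the saddle-point set, namely $SPP = V_H^{-1}({\bf 0})$. Unwinding the definition \eqref{eq saddle}, the inclusion ${\bf 0} \in V_H(\tilde{x},\tilde{y})$ is equivalent to the pair of inclusions ${\bf 0} \in \partial(-H(\cdot,\tilde{y}))(\tilde{x})$ and ${\bf 0} \in \partial(H(\tilde{x},\cdot))(\tilde{y})$. By the Fermat-type optimality characterization $x \in S_g \Leftrightarrow {\bf 0} \in \partial g(x)$ recorded above (valid for geodesic convex functions), the first inclusion says that $\tilde{x}$ maximizes $H(\cdot,\tilde{y})$ over $K_1$, i.e. $H(x,\tilde{y}) \leq H(\tilde{x},\tilde{y})$ for all $x \in K_1$, while the second says that $\tilde{y}$ minimizes $H(\tilde{x},\cdot)$ over $K_2$, i.e. $H(\tilde{x},\tilde{y}) \leq H(\tilde{x},y)$ for all $y \in K_2$. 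Combining these two one-sided inequalities gives exactly the saddle-point inequality, so $\tilde{z}=(\tilde{x},\tilde{y})$ is a saddle point of $H$ if and only if ${\bf 0} \in V_H(\tilde{z})$, which proves $SPP = V_H^{-1}({\bf 0})$.

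With this identification in hand I would verify the hypotheses of Theorem \ref{thm 1}. The vector field $V_H$ is maximal monotone by the preceding theorem \cite{MR2506692}; the bifunction $F$ satisfies (A1)--(A6) with $T^F_{r_n}$ having closed geodesic convex domain; the parameter sequences $\{\alpha_n\}, \{\beta_n\}, \{\lambda_n\}, \{r_n\}$ obey precisely conditions (i)--(iii) of Algorithm \ref{alg 1}; and the solution set $\Omega = EP(F) \cap V_H^{-1}({\bf 0}) = EP(F) \cap SPP$ is nonempty by assumption. Theorem \ref{thm 1} then guarantees that the sequence $\{x_n\}$ produced by the displayed iteration is Fej\'{e}r convergent with respect to $\Omega$ and that each of its cluster points lies in $\Omega$, whence by Lemma \ref{lem fejer}(iii) the whole sequence $\{x_n\}$ converges to a point of $EP(F) \cap SPP$, which is the desired conclusion.

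Since the entire argument reduces to an application of Theorem \ref{thm 1}, there is no deep analytic obstacle here; the only genuinely substantive point is the equivalence $SPP = V_H^{-1}({\bf 0})$, and even that is immediate once the product structure of $V_H$ is read against the subdifferential optimality condition. The one matter requiring care is purely notational: the resolvent appearing in the iteration must be understood as the resolvent $J^{V_H}_{\lambda_n}$ of the vector field $V_H$ itself, so that Theorem \ref{thm resolvent is fimly} applies to render it single-valued and firmly nonexpansive, exactly as the proof of Theorem \ref{thm 1} requires.
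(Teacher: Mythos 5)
Your proposal is correct and takes essentially the same route as the paper, which obtains this theorem precisely as a specialization of Theorem \ref{thm 1} to the maximal monotone field $A = V_H$ on the product Hadamard manifold, after noting that saddle points of $H$ coincide with singularities of $V_H$. The paper leaves the identification $SPP = V_H^{-1}({\bf 0})$ to the reader (``one can check''), so your explicit verification via the subdifferential optimality condition, and your remark that the resolvent in the iteration should be read as $J^{V_H}_{\lambda_n}$, simply fill in details rather than constituting a different argument.
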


\begin{cor}
Let $H : K = K_1 \times K_2 \to \mathbb{R}$ be a saddle function and $V_H: K_1 \times K_2 \to 2^{TM_1} \times 2^{TM_2}$ be the associated maximal monotone vector field such that $SSP \neq \emptyset.$ Choose initial point $x_0 \in K$ and define $\{x_n\}$ as follows: 
$$	x_{n+1}  : = \exp_{x_n}\alpha_n\exp^{-1}_{x_n}J^{\partial V_H}_{\lambda_n}(x_n), \quad \forall n \in \mathbb{N},$$
where $\{\alpha_n\}$and $ \{\lambda_n\}$ are real positive sequences such that
\begin{itemize}
	\item[{\rm (i)}] $0 < a \leq \alpha_n \leq b <1, \quad \forall n \in \mathbb{N}$,
	\item [{\rm(ii)}] $0 < \hat{\lambda}  \leq \lambda_n \leq \tilde{\lambda} < \infty, \quad \forall n \in \mathbb{N}$.
\end{itemize}
Then, the sequence $\{x_n\}$ converges to a saddle point of $H$.
\end{cor}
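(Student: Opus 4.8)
The plan is to realise this corollary as a direct specialisation of Corollary \ref{cor 1} applied to the maximal monotone vector field $V_H$ on the product manifold. First I would record the ambient setting: by the product structure recalled above, $M = M_1 \times M_2$ is again a Hadamard manifold with the product metric, and $K = K_1 \times K_2$ is a nonempty closed geodesic convex subset, so that the framework of Algorithm \ref{alg 2} applies verbatim with $A := V_H$. Here the resolvent symbol $J^{\partial V_H}_{\lambda_n}$ appearing in the displayed iteration should be read as $J^{V_H}_{\lambda_n}$, since $V_H$ is itself the vector field to which the resolvent is attached.

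Next I would invoke the two structural facts established just before the statement. The first is that $V_H$, defined by \eqref{eq saddle}, is maximal monotone. The second is the equivalence that a point $\tilde{z} = (\tilde{x},\tilde{y}) \in K$ is a saddle point of $H$ if and only if it is a singularity of $V_H$; in other words $SPP = V_H^{-1}(\mathbf{0})$. Consequently the standing hypothesis that the set of saddle points is nonempty is exactly the requirement $V_H^{-1}(\mathbf{0}) \neq \emptyset$ demanded by Corollary \ref{cor 1}.

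With these identifications in place, the displayed iteration is precisely Algorithm \ref{alg 2} run for the maximal monotone field $V_H$, with step-size sequences $\{\alpha_n\}$ and $\{\lambda_n\}$ obeying conditions (i) and (ii). Corollary \ref{cor 1} then guarantees that $\{x_n\}$ converges to a solution of the inclusion problem \eqref{eq inclusion 2} for $V_H$, that is, to some $\tilde{z} \in V_H^{-1}(\mathbf{0})$; by Remark \ref{rem resolvent} this limit is a genuine singularity, and by the saddle-point/singularity equivalence it is therefore a saddle point of $H$.

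Since the whole argument is a translation into product-manifold language followed by a single appeal to Corollary \ref{cor 1}, I do not expect a substantive obstacle. The only points needing care are bookkeeping ones: confirming that the product metric and inverse exponential map decompose as stated so that $V_H \in \mathfrak{X}(K)$ in the required sense, and correctly matching the two hypotheses of Corollary \ref{cor 1}, namely maximal monotonicity and a nonempty singularity set, to the data at hand. Both are furnished by the results recalled above, so the conclusion follows immediately.
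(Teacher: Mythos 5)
Your proposal is correct and is exactly the argument the paper intends: the corollary is the specialization of Corollary \ref{cor 1} (equivalently, Algorithm \ref{alg 2}) to the maximal monotone field $V_H$ on the product Hadamard manifold, combined with the identification $SPP = V_H^{-1}(\mathbf{0})$, and your reading of $J^{\partial V_H}_{\lambda_n}$ as $J^{V_H}_{\lambda_n}$ correctly identifies a typographical slip in the statement. No substantive difference from the paper's route.
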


\section*{Acknowledgments}
The first author was financially supported by the Research Professional Development Project Under the Science Achievement Scholarship of Thailand (SAST). 
This project was supported by Center of Excellence in Theoretical and Computational Science (TaCS-CoE), Faculty of Science, KMUTT.


\end{document}